\documentclass[12pt,bezier]{article}
\usepackage{indentfirst}
\usepackage{changepage}
\usepackage{amssymb}
\usepackage{enumerate}
\usepackage{mathrsfs}
\usepackage{amsmath}
\usepackage{amsfonts,amsthm,amssymb}
\usepackage{amsfonts}
\usepackage{graphicx}
\usepackage{caption}
\usepackage{float}
\usepackage[numbers,sort&compress]{natbib}

\textheight=22cm \textwidth=16cm
\parskip = 0.2cm
\topmargin=0cm \oddsidemargin=0cm \evensidemargin=0cm
\newtheorem{lem}{Lemma}[section]
\newtheorem{thm}[lem]{Theorem}

\newtheorem{defi}[lem]{Definition}
\newtheorem{con}[lem]{Conjecture}

\captionsetup[figure]{labelsep=period,font={scriptsize},name={Figure}}

\begin{document}
	
	\title{The minimum degree of minimally $ t $-tough graphs}
	
	\author{Hui Ma, Xiaomin Hu, Weihua Yang\footnote{Corresponding author. E-mail: ywh222@163.com; yangweihua@tyut.edu.cn}\\
		\\ \small Department of Mathematics, Taiyuan University of
		Technology,\\
		\small  Taiyuan Shanxi-030024,
		China\\
	}
	\date{}
	\maketitle
	
	{\small{\bf Abstract:} A graph $ G $ is minimally $ t $-tough if the toughness of $ G $ is $ t $ and deletion of any edge from $ G $ decreases its toughness. Katona et al. conjectured that the minimum degree of any minimally $ t $-tough graph is $ \lceil 2t\rceil $ and gave some upper bounds on the minimum degree of the minimally $ t $-tough graphs in \cite{Katona, Gyula}. In this paper, we show that a minimally 1-tough graph $ G $ with girth $ g\geq 5 $ has  minimum degree at most $  \lfloor\frac{n}{g+1}\rfloor+g-1$, and  a minimally $ 1 $-tough graph with girth $ 4 $ has  minimum degree at most $ \frac{n+6}{4}$. We also prove that the minimum degree of minimally $\frac{3}2$-tough claw-free graphs is $ 3 $.

		\vskip 0.5cm  Keywords: Minimally $ t $-tough; Toughness; Minimum degree; Claw-free graphs
		
		\section{Introduction}
		
		All graphs considered in this paper are finite, simple and undirected. Let $ G=\big(V(G), E(G)\big) $ be a graph. For any vertex $ v $ of $ G $, we denote the set of vertices of $ G $ adjacent to $ v $ by $ N_{G}(v) $ and the number of edges of $ G $ incident with $ v $ by $ d_{G}(v) $. Setting $ \delta(G)=min\left\{d_{G}(v) : v\in V(G)\right\} $. Let $ \kappa(G) $ denote the vertex connectivity of $ G $. For any vertex set $ S\subseteq V(G) $, we use $ G[S] $ and $ G-S $ to denote the subgraph of $ G $ induced by $ S $ and $ V(G)-S $, respectively,  and $ w(G-S) $ to denote the number of components of $ G-S $. For any vertex sets $ S, T\subseteq V(G) $, let $ N_{S}(T) $ denote the set of vertices in $ S-T $ adjacent to $ T $. A subset $ S $ is said to be independent if no two vertices of $ S $ are adjacent in $ G $.		~~~~~
		\begin{defi}
			The notion of toughness was introduced by Chv\'atal (\cite{Chvatal}):
			$$ \tau(G)=min\left\{\dfrac{\lvert S\rvert}{w(G-S)} : S\subseteq V(G), w(G-S)\geq 2\right\}$$
			if $ G $ is not complete, and $ \tau(G)=+\infty $ if $ G $ is complete.
			We say that a graph is $ t-tough $, if its toughness is at least $ t $.
		\end{defi}

Since Chv\'atal  introduced toughness in 1973, many results concerning the relation  between toughness with cycle structure and factors have been widely studied (see \cite{Lu, Kab}). In the progress of research on the Hamiltonicity of $ 2 $-tough graph, Broersma introduced the concept of minimally $ t $-tough graphs (\cite{Broersma}).
		\begin{defi}
			Graph $ G $ is said to be minimally $ t $-tough, if $ \tau(G)=t $ and $ \tau(G-e)<t $ for any $ e\in E(G) $.
		\end{defi}
 From the definition of toughness, we note that a $t$-tough noncomplete graph is $2t$-connected. Thus the minimum degree of a $t$-tough graph is at least $ \lceil 2t\rceil$. It is well known that the minimum degree of every minimally $k$-connected graph is exactly $k$ (\cite{Mader}). Thus, the following is a natural question.

		\begin{con}[Kriesell \cite{Kaiser}]
			 Every minimally 1-tough graph has a vertex of degree 2.
		\end{con}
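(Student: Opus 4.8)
The plan is to establish the stronger statement that no minimally $1$-tough graph $G$ can have $\delta(G)\ge 3$. Since a $1$-tough noncomplete graph is $2$-connected and hence satisfies $\delta(G)\ge 2$, ruling out $\delta(G)\ge 3$ forces a vertex of degree exactly $2$. The first move is to extract the structural meaning of minimality. \textbf{Step 1 (edge certificates).} For every edge $e=uv$ I would produce a \emph{tight set} $S_e$ with $u,v\notin S_e$, with $|S_e|=w(G-S_e)$, and such that $e$ is a bridge of $G-S_e$. Indeed, $\tau(G-e)<1$ gives a set $S$ with $w\big((G-e)-S\big)>|S|$; if $u$ or $v$ lay in $S$ then $(G-e)-S=G-S$, contradicting $\tau(G)=1$, so $u,v\notin S$. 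Deleting one edge raises the component count by at most one, so $|S|\le w(G-S)\le w\big((G-e)-S\big)\le w(G-S)+1$, and $\tau(G)=1$ pins this down to $|S|=w(G-S)$ with $e$ joining two vertices of a single component of $G-S$ whose removal splits it.

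\textbf{Step 2 (Hall structure of tight sets).} For any tight set $S$ with components $C_1,\dots,C_k$, where $k=|S|$, I would show the bipartite incidence graph between $S$ and $\{C_1,\dots,C_k\}$ has a perfect matching. If Hall's condition failed, a subfamily $\mathcal C$ of components whose neighbourhood $S':=N_S\big(\bigcup_{C\in\mathcal C}C\big)$ satisfies $|S'|<|\mathcal C|$ would, upon deletion of $S'$, leave at least $|\mathcal C|>|S'|$ components (the members of $\mathcal C$ are joined to the rest of $G$ only through $S'$), contradicting $\tau(G)=1$. Hence each component of a tight set is represented by a distinct neighbour in $S$, which rigidly constrains the bipartite exchange between $S$ and the components.

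\textbf{Step 3 (an innermost fragment).} Mirroring Mader's proof that a minimally $k$-connected graph has a vertex of degree $k$, I would let tight sets play the role of minimal separators and their components the role of \emph{fragments}. Assuming $\delta(G)\ge 3$, I would choose among all pairs $(S,C)$, with $S$ tight and $C$ a component of $G-S$, one minimizing $|V(C)|$. Taking an edge $e$ incident to $C$ and invoking its certificate $S_e$ from Step~1, I would uncross $S$ against $S_e$: the small component that $e$ splits off in $G-S_e$ should be shown to lie inside $C$, yielding either a strictly smaller fragment (contradicting minimality) or a vertex whose entire neighbourhood is absorbed into a separator, i.e. a vertex of degree at most $2$ (contradicting $\delta(G)\ge 3$). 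Step~2 would be used to rule out the degenerate configurations where the exchange graph is too sparse to force such a vertex.

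\textbf{Main obstacle.} The heart of the difficulty is the uncrossing in Step~3. Unlike vertex connectivity, toughness is an averaged, global quantity, so tight sets do not enjoy the submodularity of minimum separators: their components need not \emph{nest} and the certificates $S_e$ of different edges need not align. Edge deletion changes the component count by at most one, yet the component it splits can be large and can straddle several components of a competing tight set, so there is no mechanism to drive an innermost fragment down to a single low-degree vertex. Controlling this interaction between certificates is precisely where a general argument stalls, which is why I would expect to first secure the conclusion under hypotheses that suppress short cycles and dense neighbourhoods---the girth and claw-free restrictions pursued in this paper---before attempting the unrestricted conjecture.
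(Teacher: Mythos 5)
You were asked to prove Kriesell's Conjecture, which is an \emph{open problem}: the paper states it only as a conjecture and proves nothing stronger than partial results (the $\frac{n}{3}+1$ bound of Katona et al., the girth bounds of Theorems 1.6 and 2.2, and the claw-free $\frac{3}{2}$-tough case). Your proposal, read as a proof, has a genuine and fatal gap, and to your credit you name it yourself. Steps 1 and 2 are correct: Step 1 is precisely the paper's Lemma 2.1 (quoted from Katona, Solt\'esz and Varga), except that your inequality chain is written backwards --- $\tau(G)=1$ gives $|S|\geq w(G-S)$ for every cut set $S$, not $|S|\leq w(G-S)$; the pinned-down equalities $|S|=w(G-S)$ and $w\big((G-e)-S\big)=|S|+1$ are nevertheless the right conclusion, and your observation that $u,v\notin S$ is a correct easy addition. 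Step 2 (Hall's condition forces a perfect matching between a tight set and its components) is also a true and standard fact about $1$-tough graphs. But the entire content of the conjecture lives in Step 3, which you do not execute: you never show that the component split off by the certificate $S_e$ nests inside the chosen minimal fragment, nor that minimality then produces a degree-$2$ vertex, and your closing paragraph concedes that tight sets lack the submodularity that makes Mader's innermost-fragment argument work for connectivity. An outline whose key step is declared to stall is a research program, not a proof, and indeed no proof is known.

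It is worth noting how the paper itself sidesteps the obstacle you identify: it never attempts to uncross certificates of different edges. Instead it takes the Lemma 2.1 certificate of a \emph{single} edge and argues by counting, split according to the size of $|S|$ --- when $|S|$ is small, a breadth-first growth estimate inside the two components containing $u$ and $v$ (powered by the girth hypothesis, which makes the distance layers expand like $\big(\delta(G)-k-1\big)^{i}$) forces more than $n$ vertices; when $|S|$ is large, pigeonhole gives a small component of $(G-e)-S$ and hence a vertex of small degree. Your Steps 1 and 2 would feed naturally into arguments of exactly this kind, so they are a sound starting point for partial results; but as an attack on the unrestricted conjecture the proposal is irreparably incomplete at Step 3.
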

		
		\begin{con}[Generalized Krisell Conjecture \cite{Katona}]
		Every minimally t-tough graph has a vertex of degree $ \lceil 2t\rceil $.
	\end{con}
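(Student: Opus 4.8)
The plan is to establish the conjecture by transporting Mader's theorem on minimally $k$-connected graphs to the toughness setting. Since the toughness of a complete graph is infinite, a minimally $t$-tough graph with finite $t$ is noncomplete, so by the remark in the introduction it satisfies $\kappa(G)\ge\lceil 2t\rceil$ and hence $\delta(G)\ge\lceil 2t\rceil$; it therefore suffices to prove the reverse inequality $\delta(G)\le\lceil 2t\rceil$, which together with the trivial bound forces a vertex of degree exactly $\lceil 2t\rceil$. I would argue by contradiction, assuming $d_G(v)\ge\lceil 2t\rceil+1$ for every $v\in V(G)$, and then exploit the minimality of the toughness to manufacture a low-degree vertex.

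First I would extract the local structure forced by minimal toughness. For each edge $e=xy$, the hypothesis $\tau(G-e)<t$ yields a set $S_e\subseteq V(G)$ with $|S_e|<t\,w((G-e)-S_e)$. Deleting a single edge raises the component count of a fixed cut by at most one, so $w((G-e)-S_e)=w(G-S_e)+1$ and $t\,w(G-S_e)\le|S_e|<t\,w(G-S_e)+t$; thus $S_e$ is a (near-)tough set of $G$, the endpoints $x,y$ lie outside $S_e$ in a common component $C$ of $G-S_e$, and $e$ is the unique link joining the two parts into which $G[C]-e$ falls. (For $t=1$ this pins $S_e$ down exactly as a tough set with $|S_e|=w(G-S_e)$.) These separators $\{S_e\}$, together with the components they create, are the objects I would analyze.

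Next, in analogy with Mader's use of atoms, I would order the pairs $(S,C)$, where $S$ is a tough set and $C$ a component of $G-S$, by $|C|$, and fix a pair with $|C|$ minimum; I call $C$ an \emph{atom}. Every vertex of $C$ has all of its neighbours in $C\cup N_S(C)$, while $2t$-connectivity forces $|N_S(C)|\ge\lceil 2t\rceil$ and the minimality of $|C|$ should prevent any tough set from cutting $C$ into still smaller pieces. Summing degrees over $C$ and using that each edge of $G[C]$ is itself critical (so carries its own separator through $C$), I would try to show that the average degree inside an atom cannot exceed $\lceil 2t\rceil$, producing the desired vertex and contradicting the standing assumption.

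The hard part — and the reason this conjecture is still open — is the last step, where one must control how different tough sets interact. In the connectivity setting the cut function is submodular, so two crossing minimum cuts can be uncrossed and atoms shown to be cliques disjoint from high-degree vertices. The toughness analogue $S\mapsto|S|-t\,w(G-S)$ is \emph{not} submodular, because $w(G-S)$ does not behave monotonically under intersections and unions of cuts; consequently tough sets need not uncross, and a single atom may receive edges from several overlapping separators at once. To push the argument through I would search for a restricted family of tough sets that does obey an uncrossing inequality — for instance insisting that every $S_e$ realize the minimum ratio and that the created components be mutually laminar — and attempt an induction on $|V(G)|$ in which contracting or deleting an atom preserves minimal $t$-toughness. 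Making this uncrossing step work when $2t$ is non-integral, where the slack $t\,w(G-S)+t-|S_e|$ clouds the arithmetic, is where I expect the genuine difficulty to lie.
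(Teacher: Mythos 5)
The statement you were asked to prove is not a theorem of this paper at all: it is the Generalized Kriesell Conjecture, recorded here as an open problem. The paper proves only special cases --- minimally $1$-tough graphs of girth $g\geq 5$ have a vertex of degree at most $\lfloor\frac{n}{g+1}\rfloor+g-1$, girth $4$ gives $\frac{n+6}{4}$, and minimally $\frac{3}{2}$-tough claw-free graphs have a vertex of degree $3$, the last by routing everything through the Matthews--Sumner identity $2\tau(G)=\kappa(G)$ and the Entringer--Slater theorem that a critically $3$-connected graph has a vertex of degree $3$. So there is no proof in the paper to compare yours with; the only question is whether your argument settles the conjecture, and by your own admission in the final paragraph it does not.

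The gap is exactly where you place it, and it is the whole problem, not a technicality to be patched. Your first two steps are sound (and essentially reproduce Lemma 2.1 / Lemma 3.3 of the paper), but the Mader-style atom argument never gets off the ground: the function $S\mapsto\lvert S\rvert-t\,w(G-S)$ is not submodular, tough sets extracted from different edges need not uncross or form a laminar family, and --- worse --- minimal toughness carries no locality whatsoever. The set $S_{e'}$ guaranteed for an edge $e'$ inside your atom $C$ can be arbitrarily large and positioned anywhere in the graph (Lemma 2.1 fixes only $\lvert S_{e'}\rvert=w(G-S_{e'})$ when $t=1$, nothing about where $S_{e'}$ sits relative to $C$), so summing degrees over $C$ controls nothing; ``each edge of $G[C]$ carries its own separator through $C$'' is an unsupported hope, and restricting to minimum-ratio, mutually laminar separators is precisely what no one knows how to do. The state of the art measures this: even for $t=1$ the best general bound is $\delta(G)\leq\frac{n}{3}+1$ (Theorem 1.5), nowhere near the conjectured constant $2$, and this paper improves it only under girth or claw-free hypotheses. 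One further slip worth flagging: your inequality $t\,w(G-S_e)\leq\lvert S_e\rvert$ invokes the toughness of $G$, which constrains only cutsets with $w\geq 2$; the degenerate case $w(G-S_e)=1$, where $S_e$ is not a cutset of $G$ and deleting $e$ creates the second component, genuinely occurs (compare Lemma 3.4(4) with $k(e)=2$) and is not covered by your derivation, so even the ``near-tough set'' normalization needs a separate argument.
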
	

Katona et al. considered Kriesell's conjecture and gave an interesting upper bound. We use $ n $ to denote the order of a graph.
		
		\begin{thm}[\cite{Gyula}]
			Every minimally 1-tough graph has a vertex of degree at most $ \frac{n}{3}+1 $.
		\end{thm}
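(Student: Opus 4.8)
The plan is to exploit minimality through the tight sets it forces, and then to bound the minimum degree by a vertex-counting argument that decomposes the graph into three large pieces.

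First I would record the structural consequence of minimality. Fix any edge $e=uv$. Since $\tau(G-e)<1$, there is a set $S$ with $w(G-e-S)>|S|$; because deleting a single edge raises the number of components by at most one while $G$ itself is $1$-tough (so $w(G-S)\le|S|$), this forces $w(G-S)=|S|$ and $w(G-e-S)=|S|+1$. Hence $S$ is a \emph{tight} set, $u$ and $v$ lie in a common component $C$ of $G-S$, and $uv$ is a bridge of $C$, so that $C-uv$ splits into two parts $A\ni u$ and $B\ni v$. I write $k=|S|=w(G-S)$ and let $C=C_1,C_2,\dots,C_k$ denote the components of $G-S$. Since $G$ is $2$-connected, each $C_i$ has at least two neighbours in $S$.

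Next I would set up the counting. Choose $v$ to be a vertex of minimum degree $\delta$, let $u$ be any neighbour of $v$, and apply the lemma to $e=uv$. The key observation is that every component of $G-S$ contains a vertex of degree at least $\delta$ whose neighbours lie inside that component together with $S$; hence $\delta\le|C_i|-1+|S|$, giving $|C_i|\ge\delta-k+1$ for each $i$. The bridge lets me do better on $C_1$: as $uv$ is the only edge between $A$ and $B$, the neighbours of $v$ lie in $(B\setminus\{v\})\cup S\cup\{u\}$ and those of $u$ in $(A\setminus\{u\})\cup S\cup\{v\}$, so $|B|\ge\delta-k$ and $|A|\ge\delta-k$. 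Thus $A$, $B$ and the remaining components $C_2,\dots,C_k$ are pieces lying outside $S$, each of size roughly $\delta-k$. Substituting these bounds into $n=|S|+|A|+|B|+\sum_{i\ge2}|C_i|$ and simplifying yields $\delta\le\frac{n+k^2-k+1}{k+1}$. At $k=2$ this is exactly $\frac{n}{3}+1$, realized by the clean decomposition into three parts $A,B,C_2$ of sizes at least $\delta-2,\ \delta-2,\ \delta-1$; this is the model case, and splitting $C_1$ at the bridge is precisely where minimality beats plain $1$-toughness (which only yields about $n/2$).

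The main obstacle is controlling the size $k$ of the tight set. One checks that $\frac{n+k^2-k+1}{k+1}\le\frac{n}{3}+1$ is equivalent to $(k-2)(3k-n)\le0$, i.e.\ to $k\le n/3$, so the argument above closes immediately for small tight sets, but the bounds $|A|,|B|\ge\delta-k$ degrade when $k$ is large (and a minimally $1$-tough graph may admit only large tight sets, for instance nearly bipartite configurations with many singleton components, which reproduce large tight sets for every incident edge). To finish I would either select $S$ to minimize $|S|$ and bound that minimum, or handle the large-$k$ regime separately, using that many components force a small component and hence a low-degree vertex, and then verify that the two ranges of $k$ combine to the single uniform bound $\frac{n}{3}+1$. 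I expect the extremal configuration, which should meet $\frac{n}{3}+1$ with equality, to dictate exactly how these two ranges must be balanced, and pinning down that balance is the delicate part of the proof.
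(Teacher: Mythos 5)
Your skeleton is sound in the middle range, and it matches the machinery this paper actually uses: your tight set $S$ with $w(G-S)=|S|=k$ and $w\big((G-e)-S\big)=k+1$ is exactly Lemma 2.1 (the paper cites the theorem itself from \cite{Gyula} without reproducing a proof, but the same per-$k$ counting appears in Case 2 of the proof of Theorem 1.6, where the function $f(k)=\frac{n+1}{k+1}+(k+1)-3$ is analyzed; your bound $\frac{n+k^2-k+1}{k+1}=\frac{n+3}{k+1}+k-2$ is just $f(k)+\frac{2}{k+1}$). However, there are two genuine gaps, one of which you have not noticed. Your claim that $(k-2)(3k-n)\le 0$ is ``equivalent to $k\le n/3$'' is false at $k=1$: there the inequality reads $n-3\le 0$, which fails for all $n>3$. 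And indeed your decomposition degenerates at $k=1$, since there are no components $C_2,\dots,C_k$: you only get $n\ge 1+|A|+|B|\ge 2\delta-1$, i.e.\ $\delta\le\frac{n+1}{2}$, far weaker than $\frac{n}{3}+1$. Because Lemma 2.1 only guarantees the \emph{existence} of some tight set, you have no control over $k$, so the case $k=1$ must be handled by a separate argument (or one must show a tight set with $k\ge 2$ can always be chosen); your write-up asserts small $k$ ``closes immediately,'' which it does not.

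The second gap, at $n/3<k\le\frac{n-1}{2}$, you do acknowledge, but the mechanism you sketch cannot close it. A small --- even trivial --- component of $(G-e)-S$ only yields a vertex whose neighbours all lie in that component, in $S$, and possibly across $e$, hence of degree at most $k+1$; since $k$ can be as large as $\frac{n-1}{2}$, this gives only a $\frac{n+1}{2}$-type bound, not $\frac{n}{3}+1$. Something genuinely structural is required in this regime: compare Case 3 of the paper's proof of Theorem 1.6, where large $k$ forces many components of at most $g-1$ vertices, 1-toughness forces these components to be paths (a degree-3 vertex $u$ in a tree component would make $w\big(G-(S\cup\{u\})\big)>|S\cup\{u\}|$), and a pigeonhole on common neighbours of their endpoints in $S$ manufactures a 4-cycle, contradicting the girth hypothesis. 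That device has no verbatim analogue in the girth-free setting of the present theorem, so the large-$k$ case of \cite{Gyula} rests on an argument your proposal neither supplies nor approximates. In short: your counting for $2\le k\le n/3$ is correct and is essentially the known smallest-component argument in a slightly weaker form, but the proof is incomplete at both extremes of $k$, and the fix you propose for the large end is demonstrably too weak.
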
	

The girth of $ G $, denoted by $ g(G) $(or shortly, $g$), is the length of a shortest cycle in $ G $. The girth of the minimally 1-tough graph $G$ is useful in the work above. 
We shall prove the following result for $g\geq 5$ in Section 2.
		
\begin{thm}\label{result1}
		Every minimally 1-tough graph with girth $ g\geq 5 $ has a vertex of degree at most $ \lfloor\frac{n}{g+1}\rfloor+g-1 $.
		\end{thm}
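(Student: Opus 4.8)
The plan is to argue by contradiction: assume that every vertex of $G$ has degree at least $\lfloor n/(g+1)\rfloor + g$, and derive a contradiction with $1$-toughness. The main tool comes from minimality. Since $\tau(G-e) < 1$ for every edge $e = xy$, there is a vertex set $S_e$ with $w((G-e)-S_e) > |S_e| \ge w(G-S_e)$; because deleting a single edge raises the number of components by at most one, this forces $w(G-S_e) = |S_e|$ (so $S_e$ is a \emph{tough set}, tight for $\tau = 1$) and shows that $e$ is a bridge joining two components $C_x \ni x$ and $C_y \ni y$ of $(G-e)-S_e$ that together form one component of $G-S_e$, with $x,y \notin S_e$. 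I will also use that a $1$-tough noncomplete graph is $2$-connected, so $\delta(G) \ge 2$ and every edge lies on a cycle, necessarily of length at least $g$.

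Fix a tough set $S$ of the above type and write $s = |S| = w(G-S)$, with components $C_1,\dots,C_s$. For any vertex $w$ lying in a component $C_i$ we have the decomposition $d(w) = |N(w)\cap S| + d_{C_i}(w) \le s + d_{C_i}(w)$, since the neighbours of $w$ outside $S$ all lie in its own component. The target bound will follow from two estimates: (i) a counting estimate $s \le \lfloor n/(g+1)\rfloor$, and (ii) the existence of a vertex $w$ whose degree \emph{inside its component} satisfies $d_{C_i}(w) \le g-1$. Granting both, $d(w) \le s + (g-1) \le \lfloor n/(g+1)\rfloor + g - 1$, contradicting the assumption.

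For estimate (i) the idea is that the girth condition forces every component of $G-S$ to be large. Here $g \ge 5$ enters decisively: since $G$ contains no cycle shorter than $g$, neighbourhoods spread out (any two vertices have at most one common neighbour, and $N(v)$ is independent), and one shows that a component cannot be ``short''. Concretely, I would argue that each $C_i$ contains at least $g$ vertices -- for instance by locating in each component a path or cycle whose existence is guaranteed by $2$-connectivity and whose length is bounded below by the girth -- so that $n = s + \sum_{i}|C_i| \ge s + sg = s(g+1)$, giving $s \le \lfloor n/(g+1)\rfloor$. For estimate (ii) one wants a component carrying a vertex of small internal degree; the sparsity imposed by girth $g\ge 5$ (bounded average degree inside a fixed component) is what should produce a vertex $w$ with $d_{C_i}(w) \le g-1$, while the bridge edge $e$, which lies on a cycle of length $\ge g$ that must exit its component through $S$, pins down the component containing the chosen vertex and its interaction with $S$.

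The hard part will be estimate (i), i.e.\ showing that the girth genuinely forces every component (or at least enough of them) to have at least $g$ vertices. This is delicate because small components are not obviously forbidden: a component that is a single edge would only yield a vertex of degree about $s+1$, and if such components were abundant the counting would degrade to the much weaker $n/3$-type bound of Theorem~1.3. The crux is therefore to choose the tough set extremally -- e.g.\ minimizing $|S|$, or minimizing the component that contains the minimum-degree vertex -- and to run a case analysis showing that a small component either cannot occur under girth $g \ge 5$ together with minimality, or else directly exhibits a vertex of degree below $\lfloor n/(g+1)\rfloor + g$. Reconciling the need for \emph{all} components to be large (for the count $s \le \lfloor n/(g+1)\rfloor$) with the need for \emph{some} vertex of internal degree at most $g-1$ is the main obstacle, and I expect the girth hypothesis $g \ge 5$ to be used precisely in closing this gap.
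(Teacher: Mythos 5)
Your proposal reduces the theorem to two estimates, and both of them are precisely the unproven hard content; moreover, as stated, both are false, so the gap is genuine and not just a matter of detail. Estimate (i), that every component of $G-S$ has at least $g$ vertices (hence $|S|\le\lfloor n/(g+1)\rfloor$), cannot be established: components of a tight tough set can be single vertices or short paths, and nothing prevents $|S|$ from being as large as almost $n/2$ (only $|S|<n/2$ follows from $w((G-e)-S)=|S|+1$). The paper's proof confronts exactly this regime: when $k=|S|\ge\lfloor\frac{n}{g+1}\rfloor+1$, it shows (under the contrary degree assumption) that there are \emph{many} components with at most $g-1$ vertices, proves each such component induces a path, and then derives the contradiction not from component sizes but from a pigeonhole count: the path endpoints $u_1,\dots,u_{\lceil(g+1)/2\rceil}$ have so many neighbours in $S$ that two of them share two common neighbours, creating a $4$-cycle and violating $g\ge 5$. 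Your framework $d(w)\le|S|+d_{C_i}(w)$ is useless here because $|S|$ alone already exceeds the target bound. Estimate (ii) is equally unsupported: girth $\ge 5$ does \emph{not} give bounded average degree inside a component (incidence graphs of projective planes have girth $6$ and average degree $\Theta(\sqrt n)$), so no vertex of internal degree $\le g-1$ need exist; and when $|S|$ is small the needed conclusion is different anyway.

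For comparison, the paper's proof is a trichotomy on $k=|S|$ in which the girth is used in three genuinely different ways. For $1\le k\le g-1$ it applies a Moore-type bound: the BFS levels $V_i$ around $u$ and $v$ in their components of $(G-e)-S$ grow like $(\delta(G)-k-1)^i$ up to depth about $g/2$ (girth makes the levels independent and common neighbourhoods disjoint), so $\delta(G)\ge\lfloor\frac{n}{g+1}\rfloor+g$ would force $n>n$; i.e., small cuts simply cannot occur, rather than yielding a vertex of small internal degree. For $g\le k\le\lfloor\frac{n}{g+1}\rfloor$ it uses your intended mechanism, but correctly: some component of $(G-e)-S$ has at most $\lfloor\frac{n-k}{k+1}\rfloor$ vertices, giving a vertex of degree at most $f(k)=\frac{n+1}{k+1}+(k+1)-3$, and convexity of $f$ bounds $\max\{f(g),f(\lfloor\frac{n}{g+1}\rfloor)\}$ by the target. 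For large $k$ it runs the $4$-cycle pigeonhole argument described above. (There is also a preliminary reduction disposing of $g\ge\sqrt n-1$ by counting around a shortest cycle, which the numeric estimates require.) You correctly identified where the difficulty lies, but the reconciliation you defer --- ``all components large'' versus ``some internal degree small'' --- is not achievable by choosing $S$ extremally; the two demands must instead be abandoned outside the middle range of $k$ and replaced by the Moore-bound and $4$-cycle arguments.
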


A graph is said to be claw-free, if it does not contain an  induced subgraph isomorphic to $ K_{1,3} $. There are two open conjectures in terms of connectivity and toughness of claw-free graphs. Matthews and  Sumner conjectured that a 2-tough claw-free graph is hamiltonian (\cite{Matthews}), and Thomassen conjectured that a 4-connected line graph is hamiltonian (\cite{Thomassen}). The two conjectures are equivalent. In fact, a $2t$-connected claw-free graph is $t$-tough. Katona et al. proved that the minimum degree of minimally $ \frac{1}2 $-tough and $ 1 $-tough claw-free graphs are completely characterized in \cite{Gyula,Katona}.  Note that the toughness of a claw-free graph is exactly $\frac{c}2$ for some integer $c$. In this paper, we show that Generalized Krisell Conjecture is true if the graph is minimally $\frac{3}2$-tough claw-free. The proof is in Section 3.

 \begin{thm}\label{result2}
 Every minimally $\frac{3}2$-tough claw-free graph has a vertex of degree 3.
 \end{thm}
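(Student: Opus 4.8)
The plan is to prove that every minimally $\frac{3}{2}$-tough claw-free graph $G$ has a vertex of degree $3$. First I would establish the lower bound on the minimum degree from general principles: since $G$ is $\frac{3}{2}$-tough and noncomplete, it is $2\cdot\frac{3}{2}=3$-connected, so $\delta(G)\geq 3$. Thus it suffices to exhibit a single vertex of degree exactly $3$, equivalently to show $\delta(G)=3$, which rules out the possibility that every vertex has degree at least $4$. The strategy is to argue by contradiction: assume $\delta(G)\geq 4$ and derive a contradiction with minimality.

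The key tool will be the standard characterization of minimality via edge-criticality. For each edge $e=uv\in E(G)$, minimality gives $\tau(G-e)<\frac{3}{2}$; because in a claw-free graph toughness takes values in $\frac{1}{2}\mathbb{Z}$ (as noted in the excerpt, it is $\frac{c}{2}$ for an integer $c$), deleting one edge drops the toughness by at least $\frac{1}{2}$, so $\tau(G-e)\leq 1$. Hence for every edge $e$ there is a \emph{witness set} $S_e\subseteq V(G-e)$ with $|S_e|\leq w\bigl((G-e)-S_e\bigr)$ and $w\bigl((G-e)-S_e\bigr)\geq 2$. First I would analyze the local structure of such a witness set relative to the endpoints $u,v$ of $e$: deleting $e$ can only increase the number of components, and it does so by at most one, and only if $u$ and $v$ end up in different components of $(G-e)-S_e$ with neither $u$ nor $v$ in $S_e$. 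This forces strong constraints, namely that $u$ and $v$ lie in distinct (often singleton or near-singleton) components after removing $S_e$.

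The heart of the argument is to combine this edge-criticality data with the claw-free hypothesis. In a claw-free graph, the neighborhood of any vertex $v$ cannot contain an independent set of size $3$; equivalently $N_G(v)$ induces a graph whose independence number is at most $2$, so $N_G(v)$ can be covered by two cliques. I would use this to control how the neighbors of a low-degree vertex are distributed among the components of $(G-e)-S_e$, forcing the witness sets to be small and the cut structure to be rigid. The main step is then a counting or exchange argument: assuming $\delta(G)\geq 4$, I would pick a vertex $v$ of minimum degree, examine the witness sets $S_e$ for the edges $e$ incident to $v$ (and possibly edges within $N_G(v)$), and use the clique-cover structure of $N_G(v)$ together with $3$-connectivity and the equality $|S_e|=w((G-e)-S_e)$ to construct either a tough-preserving edge (contradicting minimality of that edge) or a vertex cut contradicting $\tau(G)=\frac{3}{2}$.

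The hard part will be the final contradiction: showing that $\delta(G)\geq 4$ is incompatible with the simultaneous existence of the rigid witness sets for all incident edges. The delicate point is that claw-freeness only bounds the independence number of neighborhoods by $2$, so the neighbors split into at most two cliques; I expect the crux to be arguing that if every incident edge has a witness set, then one of these two cliques can be ``rerouted'' so that some edge is in fact non-critical, or that the vertex $v$ together with one of its cliques forms a separator whose ratio undercuts $\frac{3}{2}$. Managing the interaction between the two cliques covering $N_G(v)$ and the components created by the various $S_e$ — and in particular ensuring the components are small enough to violate the toughness bound — is where the careful case analysis will be needed, and I anticipate this is where the claw-free hypothesis does its essential work in pinning $\delta(G)$ down to exactly $3$.
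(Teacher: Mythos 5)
There are genuine gaps here, two of them outright errors in the stated tools. First, your claim that $\tau(G-e)\leq 1$ for \emph{every} edge $e$ because claw-free toughness is half-integral is unjustified: $G-e$ need not be claw-free. If $e=uv$ lies in a triangle, a common neighbor $w$ of $u$ and $v$ can become the center of a claw in $G-e$, and then Matthews--Sumner half-integrality does not apply to $G-e$. This is exactly why the paper's Lemma 3.2 carries the hypothesis that $e$ is contained in no triangle, and why its general workhorse (Lemma 3.3) records only the weaker consequence of $\tau(G-e)<\frac{3}{2}$, namely a set $S$ with $\frac{3}{2}w(G-S)\leq \lvert S\rvert < \frac{3}{2}\big(w(G-S)+1\big)$; here $\lvert S\rvert$ can strictly exceed $w\big((G-e)-S\big)$, so your witness sets with $\lvert S_e\rvert\leq w\big((G-e)-S_e\big)$ need not exist. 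Second, your ``equivalently $N_G(v)$ can be covered by two cliques'' is false: independence number at most $2$ in the neighborhood is not equivalent to clique cover number at most $2$ (a neighborhood inducing $C_5$ has $\alpha=2$ but needs three cliques). Two-clique neighborhoods are a feature of line graphs, not of claw-free graphs in general, so the clique-cover scaffolding on which your ``rerouting'' step rests is unavailable.

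Beyond these, the proposal misses the central organizing idea of the paper's proof and leaves the hard part unexecuted. The paper does not anchor the argument at a minimum-degree vertex; it splits on vertex criticality of the $3$-connected graph $G$ (using $2\tau=\kappa$): if $G$ is critically $3$-connected, the Entringer--Slater theorem already produces a vertex of degree $3$, and otherwise a \emph{noncritical} vertex $u$ exists, and it is noncriticality --- not low degree --- that drives all the rigidity of minimum witness sets $S(e)$ for $e=uv$ (e.g.\ $C_u(e)=\{u\}$, then $C(e)=\{u,v\}$ with $\big|N_{S(e)}(u,v)\big|=2$ when $k(e)\geq 4$, and the claw-free double-count $3l\leq 2k(e)$ forcing $\big|N_{S(e)}\big(C(e)\big)\big|=5$ when $k(e)\geq 5$), culminating in the case analysis $k(e)\in\{3,4,\geq 5\}$. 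Your text explicitly defers the contradiction (``I expect the crux to be\ldots'', ``where the careful case analysis will be needed''), but that case analysis \emph{is} the proof; without the criticality dichotomy, with witness sets weaker than you assume, and without a valid substitute for the two-clique structure, the sketch does not assemble into an argument.
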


		
		
		\section{Proof of Theorem \ref{result1}}
		
		\begin{lem}[\cite{Gyula}]\label{lem2.1}
		  If $ G $ is a minimally 1-tough graph, then for every edge $ e\in E(G) $ there exists a vertex set $ S\subseteq V(G) $ with	$ w(G-S)=\lvert S\rvert $ and $w\big(\left(G-e\right)-S\big)=\lvert S\rvert +1 $.
		\end{lem}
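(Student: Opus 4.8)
The plan is to fix an arbitrary edge $e=uv$ and play off two opposing facts: the hypothesis $\tau(G)=1$ gives a lower bound $|S|\ge w(G-S)$ on every cut, while minimality gives $\tau(G-e)<1$, which supplies a set that \emph{violates} the corresponding bound once $e$ is deleted. The whole argument is a two-sided squeeze on the single quantity $w(G-S)$ for a well-chosen $S$, controlled by the observation that deleting one edge can create at most one new component.

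First I would invoke minimality. Since $\tau(G-e)<\tau(G)=1$, the toughness of $G-e$ is attained by some set $S\subseteq V(G)$ with $w\big((G-e)-S\big)\ge 2$ and $|S|\,/\,w\big((G-e)-S\big)<1$; as both sides are integers this reads $w\big((G-e)-S\big)\ge |S|+1$. I would also record that $|S|\ge 1$: a $1$-tough noncomplete graph is $2$-connected, hence $2$-edge-connected, so $G-e$ is connected and $S=\emptyset$ cannot separate it, ruling out $|S|=0$.

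The crux is a counting observation on the effect of deleting $e$. If $u\in S$ or $v\in S$, then $e$ is already absent from $G-S$, so $(G-e)-S=G-S$ and the component counts agree; otherwise $(G-e)-S=(G-S)-e$, and deleting a single edge either preserves the number of components or increases it by exactly one. Hence in every case $w(G-S)\le w\big((G-e)-S\big)\le w(G-S)+1$. Combining the right-hand inequality with $w\big((G-e)-S\big)\ge |S|+1$ yields the lower bound $w(G-S)\ge |S|$.

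Finally I would pin the value from the other side using $\tau(G)=1$. If $w(G-S)\ge 2$, then $1$-toughness forces $|S|\ge w(G-S)$, which together with $w(G-S)\ge|S|$ gives $w(G-S)=|S|$; substituting into $|S|+1\le w\big((G-e)-S\big)\le w(G-S)+1=|S|+1$ squeezes $w\big((G-e)-S\big)=|S|+1$, exactly as required. The only remaining possibility, $w(G-S)=1$, combines with $w(G-S)\ge|S|\ge 1$ to force $|S|=1$, whence $w\big((G-e)-S\big)=2=|S|+1$, again matching the conclusion. The only delicate points are the integrality step and the single-edge component bound; once those are set up, the estimates leave no room, so I do not anticipate a genuine obstacle beyond careful bookkeeping of these small cases.
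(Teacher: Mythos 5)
Your proof is correct and follows essentially the same route as the paper's source for this lemma (the paper cites it from \cite{Gyula}): take a set witnessing $\tau(G-e)<1$, use integrality of $\lvert S\rvert$ and $w\big((G-e)-S\big)$ together with the fact that deleting a single edge raises the number of components by at most one, and squeeze against $\tau(G)=1$. Your explicit treatment of the degenerate possibilities $S=\emptyset$ and $w(G-S)=1$ via $2$-connectedness is just careful bookkeeping on top of that same argument, so there is nothing genuinely different here.
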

		
	The length of a shortest $ (u, v)- $path in $ G $ is called the distance between $ u $ and $ v $ in $ G $ and denoted by $ d_{G}(u, v) $.
	
\noindent{\bf Proof of Theorem 1.6.}
			Let $ G $ be a minimally 1-tough graph with order $ n $ and girth $ g\geq 5 $. Let $ C^{'} $ be a shortest cycle in $ G $ and $V(C^{'})=\left\{v_{i}\vert i=1, 2, \cdots, g\right\}$. Clearly, $ \big| N_{C^{'}}(v_{i})\big|=2 $ and $ \big(N_{G-C^{'}}\left(v_{i}\right)\big)\cap N_{G}(v_{j})=\emptyset $ for $ i\neq j $, otherwise we can obtain a shorter cycle, a contradiction. Thus $ n\geq g+g\big(\delta(G)-2\big) $. If $ g\geq \sqrt{n}-1 $, then $ \delta(G)\leq \frac{n}{g}+1\leq \lfloor\frac{n}{g+1}\rfloor+g-1 $.
		 Hence assume $ 5\leq g<\sqrt{n}-1 $ in the following.
			
			Let $ e=uv $ be an arbitrary edge of $G$. By Lemma~\ref{lem2.1}, there exists a vertex set $ S\subseteq V(G)$ such that $ w(G-S)=\lvert S\rvert $ and $ w\big(\left(G-e\right)-S\big)=\lvert S\rvert +1 $. Set $ \lvert S\rvert=k $. Since $ \big|V(G)\big|\geq \lvert S\rvert + w\big((G-e)-S\big)$, we have $ k<\frac{n}{2} $.
			
			\par
			{\bf Case 1.}
			 $ 1\leq k\leq g-1 $.
			 \par
			  Suppose to the contrary that $ \delta(G)\geq \lfloor\frac{n}{g+1}\rfloor+g $. Let $ C $ be the component of $ G-S $ containing $ e $ and $ D $ be the union of components of $  G-S-C $. Let $ C_{u} $ and $ C_{v} $ denote the components of $ (G-e)-S $ containing $ u $, $ v $, respectively. Assume $ V_{i}=\left\{x: d_{C_{v}}(x, v)=i\right\} $ for each nonnegative integer $ i $. 
			  
			   If $ g $ is odd, according to the definition of girth, then $ V_{i} $ is independent for each $ i\in\left[0, \frac{g-3}{2}\right] $. In addition, $ N_{_{V_{\frac{g-1}{2}}}}(u_{\frac{g-3}{2}})\cap N_{_{V_{\frac{g-1}{2}}}}(v_{\frac{g-3}{2}})=\emptyset $ for each $ u_{\frac{g-3}{2}}, v_{\frac{g-3}{2}}\in V_{\frac{g-3}{2}} $. Then we have the following:
			  	$$ \lvert C_{v}\rvert\geq \sum_{i=0}^{\frac{ g-1}{2}}\lvert V_{i}\rvert 
			  	\geq\sum_{i=0}^{\frac{g-1}{2}}\big(\delta\left(G\right)-k-1\big)^{i}. $$
	By a similar argument, we can obtain $$ \lvert C_{u}\rvert
	\geq \sum_{i=0}^{\frac{ g-1}{2}}\big(\delta\left(G\right)-k-1\big)^{i}.\nonumber $$
			Since $ \sqrt{n}-1>g\geq 5$, then we have $ \frac{\sqrt{n}-2}{2}>2$. Combining $ k\leq g-1 $ and $ g<\sqrt{n}-1 $, then,
		\begin{equation}
		\begin{aligned}
			n\geq \lvert C\rvert+\lvert S\rvert+\lvert D\rvert
			&\geq2\sum\limits_{i=0}^{\frac{ g-1}{2}}\big(\delta\left(G\right)-k-1\big)^{i}+2k-1
			\\&\geq 2\sum\limits_{i=0}^{\frac{ g-1}{2}}\big(\lfloor\frac{n}{g+1}\rfloor+g -(g-1)-1\big)^{i}+2(g-1)-1\\&>\sqrt{n}^{\frac{\sqrt{n}-2}{2}}+2\sqrt{n}-5>n, \nonumber		
		\end{aligned}
	\end{equation}
		a contradiction.

If $g$ is even, as $ \sqrt{n}-1>g\geq 6$, then $ \frac{\sqrt{n}-1}{2}-1>2 $. In the same conclusion as above, we have
		\begin{equation}
			\begin{aligned}
				n\geq \lvert C\rvert+\lvert S\rvert+\lvert D\rvert
				&\geq2\sum_{i=0}^{\frac{ g}{2}-1}\big(\delta\left(G\right)-k-1\big)^{i}+2k-1
		         >\sqrt{n}^{\frac{\sqrt{n}-1}{2}-1}+2\sqrt{n}-5>n, \nonumber		
			\end{aligned}
		\end{equation}
	a contradiction.
	\par
	{\bf Case 2.}
	 $ g\leq k\leq \lfloor\frac{n}{g+1}\rfloor $.
	 \par

 Since $ \lvert S\rvert=k $ and $ w\big((G-e)-S\big)=k+1 $, there exists a component of $ (G-e)-S $ with at most $ \lfloor\frac{n-k}{k+1}\rfloor $ vertices and inside this component each vertex has at most $ \lfloor\frac{n-k}{k+1}\rfloor-1 $ neighbors. If this component is trivial, then $ \delta(G)\leq \lfloor\frac{n}{g+1}\rfloor+1 $. Otherwise there exists a vertex which is not an endpoint of $e$ in this component, so its degree in $G$ is at most	
$ \lfloor\frac{n-k}{k+1}\rfloor -1+k\leq \frac{n-k}{k+1}-1+k=\frac{n+1}{k+1}+(k+1)-3 $.  

 	Consider the function
 \begin{center}
 	$ f(k)=\dfrac{n+1}{k+1}+(k+1)-3 $.
 \end{center}
Since $ g\leq k\leq \lfloor\frac{n}{g+1}\rfloor $, we see that $ \delta(G)\leq \lfloor f(k)\rfloor\leq max\left\{f(g), f(\lfloor\frac{n}{g+1}\rfloor)\right\}$. As $ 0<\frac{g+1}{n}<1 $ and
\begin{equation}
	\begin{aligned}
		f(g)&=\frac{n+1}{g+1}+g-2<\lfloor\frac{n}{g+1}\rfloor+g-1,\\
		f(\lfloor\frac{n}{g+1}\rfloor)&=\frac{n+1}{\lfloor \frac{n}{g+1}\rfloor +1}+\lfloor\frac{n}{g+1}\rfloor-2
		\leq \lfloor\frac{n}{g+1}\rfloor +g-1+\frac{g+1}{n},
		\nonumber
	\end{aligned}
\end{equation}
the result follows.

	{\bf Case 3.} $ \lfloor\frac{n}{g+1}\rfloor+1\leq k<\dfrac{n}{2} $.
	\par
 Suppose to the contrary $ \delta(G)\geq \lfloor\frac{n}{g+1}\rfloor+g $.
Let $ T_{1}, T_{2}, \cdots, T_{l} $ be the components of $ G-S $ with at most $ g-1 $ vertices. Then we have $ l+g(k-l)+k\leq n $. Moreover, since $ k\geq\delta(G)-1 $ and $ g\geq 5 $, we can obtain  
\begin{equation}
	\begin{aligned}
		l\geq \frac{g+1}{g-1}k-\frac{n}{g-1}\geq \frac{g+1}{g-1}\big(\lfloor\frac{n}{g+1}\rfloor+g-1\big)-\frac{n}{g-1}>g-1\geq \lceil \frac{g+1}{2}\rceil 
	.\nonumber
	\end{aligned}
\end{equation} 

We claim $ G[T_{i}] $ is a path for each $ i\in \left[1, l\right] $. Clearly, $ G[T_{i}] $ is a tree for each $ i\in \left[1, l\right] $. If $ d_{T_{j}}(u)\geq 3 $ for some $ j\in \left[1, l\right] $ and $ u\in V(T_{j}) $, then  $ \big|S\cup\left\{u\right\}\big|<w(G-S\cup\left\{u\right\})$, which contradicts $ \tau(G)=1 $. Assume $ u_{i} $ is the starting vertex of $ T_{i} $ for each $ i\in \left[1, l\right] $ and $ W=\left\{u_{i} : i=1, 2, \cdots, \lceil \frac{g+1}{2}\rceil\right\} $. Then
	\begin{equation}
		\begin{aligned}
			\sum_{i=1}^{l}\big|N_{S}(u_i)\big|&\geq 
			\sum_{i=1}^{\lceil \frac{g+1}{2}\rceil}\big|N_{S}(u_i)\big|&\geq \lceil  \frac{g+1}{2}\rceil\big(\lfloor\frac{n}{g+1}\rfloor+g-1\big)>\frac{n}{2}+\lceil \frac{g+1}{2}\rceil(g-2).\nonumber
		\end{aligned}
	\end{equation}
Since $ \lvert S\rvert<\frac{n}{2} $, there exists a vertex $ u_{j}\in W $ such that $ \Big|N_{S}(u_{j})\cap N_{S}\big(W-\left\{u_{j}\right\}\big)\Big|> g-2 $. Since $ g-2>\big|W-\left\{u_{j}\right\}\big|$, then there exists a vertex of $ W-\left\{u_{j}\right\} $ adjacent to at least two vertices of $ N_{S}(u_{j}) $. So there exists a 4-cycle in $ G $, a contradiction.\qed

We have the following result for $ g=4 $.

\begin{thm}
	 Every minimally 1-tough graph with girth $ 4 $ has a vertex of degree at most $ \frac{n+6}{4} $.
\end{thm}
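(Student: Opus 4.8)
The plan is to reuse the separating set supplied by Lemma~\ref{lem2.1} and to exploit that girth $4$ makes $G$ triangle-free, so that every induced subgraph on $m$ vertices has at most $m^2/4$ edges (Mantel's theorem) and any two adjacent vertices have disjoint neighbourhoods. First I would fix an arbitrary edge $e=uv$ and take $S$ with $w(G-S)=\lvert S\rvert=k$ and $w((G-e)-S)=k+1$; as in the proof of Theorem~\ref{result1} we have $k<\frac{n}{2}$, and since a noncomplete $1$-tough graph is $2$-connected we have $k\ge 2$. The whole argument then splits according to the size of $k$, with the threshold $k=\frac{n}{4}$ appearing naturally.

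For the range $2\le k\le \frac{n}{4}$ I would argue by averaging inside a smallest component $T$ of $G-S$, which satisfies $\lvert T\rvert\le \frac{n-k}{k}$. Every vertex of $T$ has at most $k$ neighbours in $S$, and by Mantel $G[T]$ has at most $\lvert T\rvert^{2}/4$ edges; summing degrees over $T$ (where a $T$-vertex sees only $T$ and $S$) gives $\sum_{x\in T}d_G(x)\le \frac{\lvert T\rvert^{2}}{2}+k\lvert T\rvert$, so some vertex of $T$ has degree at most $\frac{\lvert T\rvert}{2}+k\le \frac{n-k}{2k}+k$. The inequality $\frac{n-k}{2k}+k\le\frac{n+6}{4}$ rearranges to $(k-2)(4k-n)\le 0$, which holds precisely for $2\le k\le \frac{n}{4}$; notably both endpoints give equality, so this case yields exactly the bound $\frac{n+6}{4}$.

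The remaining and genuinely harder case is $\frac{n}{4}<k<\frac{n}{2}$, where the averaging fails because a vertex of a small component may still have up to $k\approx \frac{n}{2}$ neighbours in $S$, and where the girth-$5$ device of producing a forbidden $4$-cycle (Case~3 of Theorem~\ref{result1}) is unavailable, since girth $4$ permits $4$-cycles. Here $G-S$ has more than $\frac{n}{4}$ components totalling fewer than $\frac{3n}{4}$ vertices, so the average component has fewer than three vertices and there are many very small components. A component of order exactly two, say $\{x,y\}$ with $xy\in E(G)$, is immediately favourable: triangle-freeness forces $N_S(x)\cap N_S(y)=\emptyset$, hence $\lvert N_S(x)\rvert+\lvert N_S(y)\rvert\le k$ and one of $x,y$ has degree at most $1+\frac{k}{2}<\frac{n+6}{4}$. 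Thus I may assume every component has order $1$ or at least $3$, which forces a large number of singletons. For the final step I would combine the tightness of $S$ (applying the toughness inequality to $S\setminus\{s\}$ shows each vertex of $S$ meets at least two components) with edge-minimality: applying Lemma~\ref{lem2.1} to an edge joining a singleton of smallest degree to $S$ produces a new separating set, whose size I expect to be forced into the already-solved range $k\le \frac{n}{4}$, returning the analysis to the averaging argument.

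The step I expect to be the main obstacle is exactly this last one: excluding a configuration in which $S$ is large and every singleton component has many, pairwise non-adjacent, neighbours in $S$. Such near-bipartite configurations are compatible both with $1$-toughness and with triangle-freeness, so only edge-minimality can rule them out; making the reduction via Lemma~\ref{lem2.1} quantitative — so that the witness set it returns provably lands in the range $k\le \frac{n}{4}$ — is the delicate point the proof must handle with care.
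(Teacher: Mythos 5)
Your averaging step for $2\le k\le \frac n4$ is correct and rather elegant: the smallest component $T$ of $G-S$ has $\lvert T\rvert\le\frac{n-k}{k}$, Mantel plus degree-summing gives a vertex of degree at most $\frac{\lvert T\rvert}{2}+k$, and the identity $\frac{n-k}{2k}+k-\frac{n+6}{4}=\frac{(k-2)(4k-n)}{4k}$ checks out. But the proposal has two genuine gaps. First, $k=1$ is not excluded by your appeal to $2$-connectivity: a set $S$ with $\lvert S\rvert=w(G-S)=1$ is not a vertex cut of $G$ at all (only of $G-e$), and Lemma~\ref{lem2.1} can return such a set --- in a cycle it always does --- so this case needs a separate argument you do not supply. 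Second, and decisively, the regime $\frac n4<k<\frac n2$ is a plan, not a proof. After correctly disposing of order-two components and showing every vertex of $S$ meets at least two components, you ``expect'' that applying Lemma~\ref{lem2.1} to an edge from a singleton into $S$ returns a witness set lying in the already-solved range $k\le\frac n4$. Nothing in the lemma controls the size of the returned set, and you offer no mechanism forcing it into that range; as you yourself concede, this is the main obstacle, and it is left open. So the hard half of the theorem is missing.

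The paper's proof avoids your large-$k$ impasse entirely and performs no case analysis on $k$. It works inside the component $C$ of $G-S$ containing $e=uv$: setting $T=N_C(u)\cup N_C(v)\setminus\{u,v\}$, triangle-freeness gives $N_G(u)\cap N_G(v)=\emptyset$ and hence $\lvert T\rvert\ge 2\delta(G)-k-2$; asserting that $T$ is independent, the paper applies the toughness inequality to the cut set $S\cup(C-T)$ to get $\lvert C-T\rvert\ge\lvert T\rvert-1$, so $\lvert C\rvert\ge 2\lvert T\rvert-1>n-2k+1$ once $\delta(G)>\frac{n+6}{4}$, leaving only $k-2$ vertices for the remaining $k-1$ components --- a contradiction. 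The idea you are missing is precisely this: convert large minimum degree into a lower bound on the size of the component \emph{containing} $e$, via an independent set of neighbours and one further application of toughness, which makes large $k$ self-defeating rather than difficult. (One caveat on the paper's side: triangle-freeness makes each of $N_C(u)\setminus\{v\}$ and $N_C(v)\setminus\{u\}$ independent but does not forbid edges between them, since those create $4$-cycles, which girth $4$ permits; so the asserted independence of $T$ deserves scrutiny. Still, it is this cut-plus-independent-set device, in some repaired form, that your approach lacks, whereas your Mantel argument for $k\le\frac n4$ is a genuinely different and self-contained alternative in that range.)
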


\begin{proof}
Suppose to the contrary that $ \delta(G)>\frac{n+6}{4}$. Let $ e=uv $ be an arbitrary edge of $ G $ and $ S $ be a vertex set guaranteed by Lemma 2.1 with $ \lvert S\rvert=k$. Assume $ C $ is the component of $ G-S $ containing $ e $.
	\par
	Since $ G $ is triangle-free, we see that $ \lvert T\rvert\geq2\delta(G)-2-k $, where $ T=N_{C}(u)\cup N_{C}(v)-\left\{u, v\right\} $ and $ T $ is independent.  Since $ G $ is 1-tough and $ S\cup (C-T) $ is a cut set of $ G $, we can obtain $ \lvert C-T\rvert\geq \lvert T\rvert-1$. Then
	\begin{center}
		$ \lvert C\rvert\geq 2\lvert T\rvert-1\geq 4\delta(G)-k-5\geq n-2k+2 $.
	\end{center}
	Therefore, there are only $ n-\big(\left(n-2k+2\right)+k\big)=k-2 $ vertices in the remaining $ k-1 $ components, which is a contradiction.
\end{proof}

\section{Proof of Theorem \ref{result2}}

\begin{thm}[\cite{Matthews}]\label{result 8}
	If $ G $ is a noncomplete claw-free graph, then $ 2\tau(G)=\kappa(G) $.
\end{thm}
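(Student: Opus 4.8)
The plan is to establish the two inequalities $2\tau(G)\le\kappa(G)$ and $\kappa(G)\le 2\tau(G)$ separately, using the claw-free hypothesis only for the second. Throughout write $\kappa=\kappa(G)$, and for a vertex set $S$ with $w(G-S)\ge 2$ let $C_1,\dots,C_w$ be the components of $G-S$, where $w=w(G-S)$.

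First I would dispose of $2\tau(G)\le\kappa(G)$, which needs no structural hypothesis and holds for every noncomplete graph: choosing a minimum vertex cut $S_0$ with $|S_0|=\kappa$ we have $w(G-S_0)\ge 2$, whence $\tau(G)\le |S_0|/w(G-S_0)\le\kappa/2$.

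The substance is the reverse inequality $\tau(G)\ge\kappa/2$. Since $\tau(G)$ is a minimum of ratios $|S|/w(G-S)$, it suffices to show $|S|/w\ge\kappa/2$ for an arbitrary $S$ with $w=w(G-S)\ge 2$. Two facts combine to give this. The claw-free hypothesis forces every $s\in S$ to send edges into at most two of the components $C_i$: if $s$ had neighbors $a,b,c$ in three distinct components, then $a,b,c$ would be pairwise nonadjacent (no edge joins two components of $G-S$), so $\{s,a,b,c\}$ would induce a $K_{1,3}$, a contradiction. On the other hand, the $\kappa$-connectivity of $G$ forces $|N_S(C_i)|\ge\kappa$ for each $i$, because $N_S(C_i)$ is precisely the set of vertices outside $C_i$ adjacent to $V(C_i)$; deleting it separates the nonempty set $V(C_i)$ from the remainder $V(G)\setminus\big(V(C_i)\cup N_S(C_i)\big)$, which is nonempty since $w\ge 2$, so $N_S(C_i)$ is a vertex cut. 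A double count then finishes the job: $\kappa w\le\sum_{i=1}^{w}|N_S(C_i)|=\sum_{s\in S}\big|\{\,i:\ s\text{ has a neighbor in }C_i\,\}\big|\le 2|S|$, giving $|S|/w\ge\kappa/2$. Combining the two inequalities yields $2\tau(G)=\kappa(G)$.

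I expect the main obstacle to be nothing more than bookkeeping around $N_S(C_i)$: one must be careful that this really is a separating set of size at least $\kappa$ (checking that $V(C_i)$ and its complement after the deletion are both nonempty, which is where $w\ge 2$ enters) and that the double count is genuinely an equality before the bound $\le 2|S|$ is applied. Once the at-most-two-components observation is secured from claw-freeness, the remaining inequalities are immediate.
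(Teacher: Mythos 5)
Your proof is correct, but note that the paper offers no proof of this statement to compare against: it is imported verbatim from Matthews and Sumner \cite{Matthews} as a known result. Your argument is the standard proof of that theorem, and it is complete: the direction $2\tau(G)\le\kappa(G)$ from a minimum cut is immediate, and for $\tau(G)\ge\kappa(G)/2$ your two ingredients are both sound --- each $s\in S$ meets at most two components of $G-S$ (else three pairwise nonadjacent neighbors give an induced $K_{1,3}$), and $N_S(C_i)=N_G(C_i)$ is a genuine vertex cut of size at least $\kappa(G)$, the complement side being nonempty precisely because $w(G-S)\ge 2$ supplies a second component disjoint from $S$. The double count $\kappa w\le\sum_{i}\bigl|N_S(C_i)\bigr|=\sum_{s\in S}\bigl|\{i: s \text{ has a neighbor in } C_i\}\bigr|\le 2\lvert S\rvert$ is a correct incidence count over pairs $(s,i)$. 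It is worth observing that the same two ingredients reappear inside the paper itself, in the proof of Lemma 3.4(2) (``every vertex of $S(e)$ has neighbors in at most two components'' and $\bigl|N_{S(e)}(C_i)\bigr|\ge 3$ from $3$-connectivity), so your blind reconstruction matches the counting technique the authors rely on downstream, applied here to recover the cited theorem rather than assumed from it.
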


\par
If G is a minimally $ \dfrac{3}{2} $-tough claw-free graph, then $ \kappa(G)=3 $ by Theorem \ref{result 8}. Moreover, if $ G-e $ is claw-free for an arbitrary edge $ e\in E(G) $, we can obtain a characterization of the endpoints of $ e $.  A vertex $ u $ of $ G $ is called critical if  $ G $ is $ k $-connected and $ G-u $ is not $ k $-connected. And if every vertex of a noncomplete graph $ G $ is critical, then $ G $ is said to be critically $ k $-connected.
\begin{lem}
	Let $ G $ be a minimally $ \dfrac{3}{2} $-tough claw-free graph. For an arbitrary edge $ e=uv\in E(G) $, if $ e $ is not contained in any triangle, then each endpoint of $ e $ is either critical or of degree 3.
	
\end{lem}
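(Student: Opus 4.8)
The plan is to convert the triangle hypothesis into claw-freeness of $G-e$, then feed this into Theorem~\ref{result 8} to pin down $\kappa(G-e)$ exactly, and finally read off the two possible local pictures at $u$ and $v$ from a minimum cut of $G-e$.

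First I would show that if $e=uv$ lies in no triangle, then $G-e$ is again claw-free. Since $G$ itself is claw-free, any induced $K_{1,3}$ in $G-e$ with vertex set $W$ must have $(G-e)[W]\neq G[W]$, so $W$ contains both $u$ and $v$. The deleted pair $uv$ is a non-edge of $G-e$, hence cannot be a centre--leaf pair of the claw (those are edges of $G-e$); therefore $u$ and $v$ are two of the three leaves. But then the centre is adjacent to both $u$ and $v$ in $G-e$, and hence in $G$, giving a common neighbour of $u$ and $v$ and thus a triangle on $e$ -- a contradiction. So no new claw appears and $G-e$ is claw-free. Because $G$ is minimally $\frac{3}{2}$-tough we have $\tau(G-e)<\frac{3}{2}$, so $G-e$ is noncomplete, and Theorem~\ref{result 8} yields $\kappa(G-e)=2\tau(G-e)<3$. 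Combining this with the standard bound $\kappa(G-e)\geq\kappa(G)-1=2$ forces $\kappa(G-e)=2$.

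Next I would exploit a minimum cut $X=\{x,y\}$ of $G-e$. Since $|X|=2<3=\kappa(G)$, the graph $G-X$ is connected, while $(G-e)-X=(G-X)-e$ is disconnected. As $u,v\notin X$ (otherwise deleting $e$ from $G-X$ changes nothing), the edge $e$ lies in $G-X$, and deleting a single edge from a connected graph produces at most two components; hence $(G-e)-X$ has exactly two components $A\ni u$ and $B\ni v$, with $e$ the unique edge of $G$ joining $A$ to $B$. This decomposition $V(G)=A\cup X\cup B$ is the structural heart of the argument. Finally I would analyse $u$, the vertex $v$ being symmetric: in $G-u$ the only $A$--$B$ edge $e$ disappears, so whenever $A\setminus\{u\}\neq\emptyset$ the set $X$ separates $A\setminus\{u\}$ from $B$, making $X$ a $2$-cut of $G-u$, so that $G-u$ is not $3$-connected and $u$ is critical. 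In the remaining case $A=\{u\}$, every neighbour of $u$ lies in $\{x,y,v\}$, so $d_G(u)\leq 3$, and since $\kappa(G)=3$ forces $\delta(G)\geq 3$ we conclude $d_G(u)=3$. Thus each endpoint of $e$ is critical or of degree $3$.

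I expect the delicate part to be the first step together with the clean two-component decomposition: certifying that deleting a triangle-free edge cannot create a claw, and then showing that the size-$2$ cut of $G-e$ splits $G$ into precisely two sides with $e$ as the only crossing edge. Once that picture is in place, the dichotomy $A=\{u\}$ versus $A\neq\{u\}$ is immediate and delivers the conclusion.
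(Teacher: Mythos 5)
Your proposal is correct and takes essentially the same route as the paper's proof: show $G-e$ is claw-free, invoke Theorem 3.1 (together with $\kappa(G)=3$) to pin down $\kappa(G-e)=2\tau(G-e)=2$, and then analyse a $2$-vertex-cut of $G-e$ to conclude that each endpoint of $e$ is critical or has degree $3$. The only difference is that you spell out details the paper leaves implicit, namely the verification that deleting a triangle-free edge creates no claw and the two-component decomposition $V(G)=A\cup X\cup B$ with $e$ as the unique crossing edge.
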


\begin{proof}
	Since $ e $ is not contained in any triangle and $ G $ is claw-free, we see that $ G-e $ is claw-free. By Theorem 3.1 and the definition of minimally $ t-$ tough graphs, $ \kappa(G-e)=2\tau(G-e)<3 $. If $ \tau(G-e)=\dfrac{1}{2} $, then $ \kappa(G-e)=1 $ and thus $ \kappa(G)\leq 2 $, a contradiction. Thus $ \tau(G-e)=1 $ and $ \kappa(G-e)=2 $. Let $ \left\{w_{1}, w_{2}\right\} $ be a 2-vertex-cut of $ G-e $. If $ d_{G}(u)=3 $, the lemma follows. If $ d_{G}(u)\geq 4 $, then $ \left\{u, w_{1}, w_{2}\right\} $ is a 3-vertex-cut of $ G $. It follows that $ u $ is critical.  Similarly, $ d_{G}(v)=3 $ or $ v $ is critical.
\end{proof}

From the definition of minimally $ t $-tough graphs, we can obtain the following lemma.
\begin{lem}
	If $ G $ is a minimally $ \dfrac{3}{2} $-tough graph, then for every edge $ e\in E(G) $ there exists a vertex set $ S\subseteq V(G) $ with
	\begin{center}
	 $ \dfrac{3}{2}{w(G-S)} \leq \lvert S\rvert< \dfrac{3}{2}{w\big(\left(G-e\right)-S\big)}= \dfrac{3}{2}{\big(w\left(G-S\right)+1\big)}$.
	\end{center}
\end{lem}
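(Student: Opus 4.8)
The plan is to mirror the structure of Lemma~\ref{lem2.1} (the $t=1$ version of Katona et al.), extracting a single witnessing set $S$ from the toughness drop and then pinning down exactly how the component count changes across the deletion of $e$. First I would invoke the definition of minimal $\frac{3}{2}$-toughness: since $\tau(G-e)<\tau(G)=\frac{3}{2}$ and the toughness of a finite graph is an attained minimum, there is a set $S\subseteq V(G)$ with $w((G-e)-S)\geq 2$ realizing $\frac{|S|}{w((G-e)-S)}=\tau(G-e)<\frac{3}{2}$, which yields the middle inequality $|S|<\frac{3}{2}w((G-e)-S)$ immediately. Note that both endpoints of $e$ must lie outside $S$, for otherwise $(G-e)-S=G-S$ and deleting $e$ would change nothing, making $S$ useless as a witness.

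Next I would use the elementary fact that deleting one edge raises the number of components by at most one, so $w(G-S)\leq w((G-e)-S)\leq w(G-S)+1$. The key step is to rule out equality on the left. If $w((G-e)-S)=w(G-S)$, then $w(G-S)\geq 2$, so $\tau(G)=\frac{3}{2}$ forces $|S|\geq\frac{3}{2}w(G-S)=\frac{3}{2}w((G-e)-S)$, directly contradicting the middle inequality already obtained. Hence $w((G-e)-S)=w(G-S)+1$, which is exactly the claimed equality $\frac{3}{2}w((G-e)-S)=\frac{3}{2}(w(G-S)+1)$.

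It then remains to establish the left-hand inequality $\frac{3}{2}w(G-S)\leq|S|$. When $w(G-S)\geq 2$ this is immediate from $\tau(G)=\frac{3}{2}$. The one subtlety --- and I expect this to be the only real obstacle --- is the boundary case $w(G-S)=1$, where $G-S$ is connected and the toughness minimum, taken only over sets leaving at least two components, supplies no bound. Here I would argue through connectivity: a noncomplete $\frac{3}{2}$-tough graph is $3$-connected (and $G$ is noncomplete, since its toughness is finite), so $\kappa(G)\geq 3$ and therefore $\kappa(G-e)\geq\kappa(G)-1\geq 2$; since in this case $S$ is a genuine cut set of $G-e$, leaving the two components containing $u$ and $v$, we get $|S|\geq\kappa(G-e)\geq 2>\frac{3}{2}=\frac{3}{2}w(G-S)$. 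This closes the last case and completes all three parts of the inequality chain. The argument is short overall; the care needed lies entirely in not accidentally applying the toughness lower bound when only a single component is present.
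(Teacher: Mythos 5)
Your proof is correct and is precisely the standard argument the paper leaves implicit: the paper states this lemma without proof (``From the definition of minimally $t$-tough graphs, we can obtain the following lemma''), the intended derivation being exactly your route, mirroring Lemma 2.1 --- take a witness $S$ for $\tau(G-e)<\frac{3}{2}$, use that deleting one edge changes the component count by at most one and rule out equality via $\tau(G)=\frac{3}{2}$, and handle the boundary case $w(G-S)=1$ through $3$-connectedness of $G$. Your treatment of that boundary case (where the toughness minimum gives no bound) is the one genuinely delicate point, and you close it correctly with $\lvert S\rvert\geq\kappa(G-e)\geq\kappa(G)-1\geq 2$.
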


  For readability of the proofs of following lemmas and Theorem 1.8, let $ S(e) $ be a minimum vertex set guaranteed by Lemma 3.3 for every edge $ e=uv\in E(G) $ and $ \big|S(e)\big|=k(e) $. Let $ C(e) $ be the component of $ G-S(e) $ containing $ e $ and $ D(e) $ be the union of components of $ G-S(e)-C(e) $. In particular, let $ C_{u}(e)$ and $C_{v}(e) $ denote the components of $ \left(G-e\right)-S(e) $ containing $ u $ and $ v $, respectively. In addition,  Let $ N_{S}(T_{1}, T_{2}, \cdots, T_{l})=\bigcap_{i=1}^{l} N_{S}(T_{i}) $, where $S, T_{i}\subseteq V(G) $  for $ i=1, 2, \cdots, l $. In particular, if $ T_{i}=\left\{t_{i}\right\} $, we will abbreviate $ N_{S}(T_{1}, \cdots, \left\{t_{i}\right\}, \cdots, T_{l})$ to $ N_{S}(T_{1}, \cdots, t_{i}, \cdots, T_{l})$.
 
 The following lemma plays a key role in the proof of Theorem 1.7.
 
\begin{lem}
	Let $ G $ be a minimally $ \dfrac{3}{2} $-tough claw-free graph with $ \delta(G)\geq 4 $. For an arbitrary edge $ e=uv\in E(G) $, assume $ w\in S(e) $. The following statements hold.
	\begin{enumerate}[(1)]
		\item If $ k(e)=3 $, then $ C(e)=\left\{u, v\right\} $, $ N_{G}(u)=\left\{v\right\}\cup S(e)$ and $ N_{G}(v)=\left\{u\right\}\cup S(e) $.
		\item If $ k(e)\geq 5 $, then $ \Big|N_{ S(e)}\big(C(e)\big)\Big|= 5 $ and every vertex of $ S(e) $ is adjacent to $ D(e) $. \label{result4}
		\item If $ D(e)\neq \emptyset $ and $ w $ is not adjacent to $ D(e) $, then $ k(e)=4 $ and $N_{S(e)}\big(D(e)\big)= S(e)\backslash\left\{w\right\} $.\label{result5} 
		\item If $ w $ is not critical, then $ k(e) =4 $, $ C(e)=\left\{u, v\right\} $ and each vertex of $ S(e)\backslash\left\{w\right\} $ is adjacent to $ D(e) $.\label{result6}
		\item If $ u $ is not critical, $ D(e)\neq\emptyset $ and $ w\in N_{S(e)}\big(u, v, D(e)\big)$, then $ d_{G}(v)= 4 $,  $ 2\leq \big|N_{G}(v, w)\big|\leq 3 $ and $ N_{G}(w)\subseteq \left\{v, t\right\}\cup N_{G}(v)$, where $ t\in N_{D(e)}(w) $.
		\item  If $ u $ is not critical, then $ C_{u}(e)=\left\{u\right\} $.\label{result7}
	
	\end{enumerate}
	
\end{lem}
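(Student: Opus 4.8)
The plan is to prove the six statements together on a common scaffold built from Lemma 3.3, from $\kappa(G)=3$ (Theorem 3.1), and from claw-freeness and $\delta(G)\ge 4$. For a fixed edge $e=uv$ I would first abbreviate $c:=w\big(G-S(e)\big)$ and rewrite Lemma 3.3 as $\frac{3}{2}c\le k(e)<\frac{3}{2}(c+1)$, so that $c$ is the unique integer in $\big(\frac{2}{3}k(e)-1,\ \frac{2}{3}k(e)\big]$; in particular $k(e)=3$ forces $c=2$ and $k(e)\ge 5$ forces $c\ge 3$. Since $w\big((G-e)-S(e)\big)=c+1$, the edge $e$ is a bridge of the connected graph $G[C(e)]$, so deleting $e$ splits $C(e)$ into exactly $C_u(e)$ and $C_v(e)$; every external neighbour of $C(e)$ lies in $S(e)$, and the only edge between the two sides is $e$. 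The two workhorses throughout are (i) $\frac{3}{2}$-toughness in the form $w(G-T)\le\lfloor\frac{2}{3}|T|\rfloor$ for every cut $T$, and (ii) the ``move-an-endpoint'' device: enlarging $S(e)$ by $u$ or $v$, or deleting a vertex $w$ from $S(e)$, produces a new cut whose component count is then squeezed by (i), by $\kappa(G)=3$, or by the minimality of $S(e)$.

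For (1), with $k(e)=3$ and $c=2$ the set $S(e)\cup\{v\}$ is a cut of size $4$, so (i) gives $w\le\lfloor 8/3\rfloor=2$; as $C_u(e)$ and the non-empty $D(e)$ already furnish two components, $C_v(e)$ must collapse to $\{v\}$, and symmetrically $C_u(e)=\{u\}$ via $S(e)\cup\{u\}$. Then $N_G(u)\subseteq\{v\}\cup S(e)$ has size at most $4=\delta(G)$, forcing equality, and likewise for $v$. This endpoint argument, run in reverse, also yields the recurring fact that if some $w\in S(e)$ has \emph{no} neighbour in $D(e)$ then $w$ rejoins $C(e)$ upon deletion, so $S(e)\setminus\{w\}$ is a cut with the same $c$ components; toughness then demands $k(e)-1\ge\frac{3}{2}c$, and combined with $k(e)<\frac{3}{2}(c+1)$ this is violated except for the single boundary value $k(e)=\frac{3}{2}c+1$ (with $c$ even), which I would dispose of by the minimality of $S(e)$ when $w$ sees only one side of $C(e)$ and by claw-freeness at $w$ when it sees both. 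This establishes, for $k(e)\ge 5$, that \emph{every} vertex of $S(e)$ meets $D(e)$, the second assertion of (2).

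Parts (2)--(4) then organize around this circle of ideas. For the count $\big|N_{S(e)}(C(e))\big|=5$ in (2), the lower bound comes from isolating $C_u(e)$ and $C_v(e)$ separately (each needs at least two $S(e)$-neighbours by $\kappa(G)=3$, with claw-freeness at $u$ and $v$ supplying the remaining one), and the upper bound from toughness applied to the cut $N_{S(e)}(C(e))$ together with the $D(e)$-adjacency just proved. Part (3) is a corollary: the contrapositive of (2) gives $k(e)\le 4$, while $k(e)=3$ is excluded because (by (1)) $S(e)\setminus\{w\}$ would be a $2$-cut separating $D(e)$; hence $k(e)=4$, and each remaining vertex must touch $D(e)$ lest $D(e)$ be severed by two vertices. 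Part (4) runs in parallel: a non-critical $w$ keeps $G-w$ three-connected, so $S(e)\setminus\{w\}$ is a cut of $G-w$ of size $k(e)-1$ with $c\ge 2$ parts, forcing $k(e)-1\ge 3$; the upper bound $k(e)=4$ and the collapse $C(e)=\{u,v\}$ are then extracted as in (1).

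The genuinely delicate parts are (5) and (6), where non-criticality of $u$ is converted into structure via the three-connectivity of $G-u$. For (6) the scheme is: if $C_u(e)\neq\{u\}$ then, with $u$ removed the only former link out of $C_u(e)\setminus\{u\}$ runs through $N_{S(e)}(C_u(e))$ (the bridge $e$ having vanished with $u$), so this set is a cut of $G-u$ and has size $\ge 3$; I would then contradict either toughness or the minimality of $S(e)$, forcing $C_u(e)=\{u\}$. Part (5) feeds $w\in N_{S(e)}\big(u,v,D(e)\big)$ and the non-criticality of $u$ into the same machinery, now tracking $N_G(w)$ against $N_G(v)$ and invoking claw-freeness at $w$ to cap $\big|N_G(v,w)\big|$ and pin $d_G(v)=4$. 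I expect the main obstacle to be precisely the \emph{sharp} equalities---the constant $5$ in (2), the value $k(e)=4$ in (4), and $d_G(v)=4$ in (5)---since in each case the correct cut is transparent but turning an inequality into an equality requires marrying the toughness bound, the claw-free condition $\alpha\big(G[N(x)]\big)\le 2$, and the minimality of $S(e)$ simultaneously; I would budget most of the effort for a clean case split there, and would order the write-up so that (2) and its $D(e)$-adjacency are fully in hand before attempting (3)--(6).
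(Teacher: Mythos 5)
The central gap is in your derivation of the $D(e)$-adjacency statement in (2). Your per-vertex device (delete a $w\in S(e)$ with no neighbour in $D(e)$, squeeze $k(e)-1\ge\frac{3}{2}c$ against $k(e)<\frac{3}{2}(c+1)$, and reduce to the boundary value $k(e)=\frac{3}{2}c+1$ with $c$ even) is fine as far as it goes, and the subcase where $w$ sees at most one of $C_u(e),C_v(e)$ is indeed killed by the minimality of $S(e)$. But the subcase where $w$ sees both sides cannot be disposed of by ``claw-freeness at $w$'': the two $C(e)$-side neighbours may simply be $u$ and $v$ themselves, which are adjacent, and nothing forces a third pairwise-nonadjacent neighbour of $w$, so no claw materialises and the configuration survives your local analysis. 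The paper's proof of (2) is not local at all: it is a global incidence count --- $\kappa(G)=3$ gives at least $3l$ component--$S(e)$ incidences, claw-freeness caps each vertex of $S(e)$ at two components, hence at most $2k(e)$ incidences, and Lemma 3.3 yields $3l\le 2k(e)<3l+3$. Moreover, the lower bound $\big|N_{S(e)}(C(e))\big|=5$ comes from minimality of $S(e)$ (if the set had size $3$ or $4$, then $N_{S(e)}\big(C(e)\big)$ would itself satisfy Lemma 3.3), not from ``isolating $C_u(e)$ and $C_v(e)$ with $\kappa(G)=3$ plus claw-freeness at $u$ and $v$'' as you propose --- that route gives at best $3$. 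The resulting forced equality $2k(e)=3l+2$ is what shows every vertex of $S(e)$ sees exactly two components and hence meets $D(e)$; your per-vertex deletion has no access to this equality.

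The second failure is in (4): you plan to extract $k(e)=4$ and the collapse $C(e)=\{u,v\}$ ``as in (1)'', but the ratio trick of (1) works only because $\frac{|S(e)\cup\{u\}|}{3}=\frac{4}{3}<\frac{3}{2}$ when $k(e)=3$; at $k(e)=4$ the same enlargement gives $\frac{5}{3}\ge\frac{3}{2}$ and yields nothing, and your treatment also skips $k(e)=2$ (where $c=1$, so $S(e)\setminus\{w\}$ is not even a cut of $G-w$; the paper needs the $K_4$ and $3$-cut argument there) and does not explain how non-criticality excludes $k(e)\ge 5$ (the paper's counting in (2) shows each component of $D(e)$ has exactly three neighbours in $S(e)$, so every vertex of $S(e)$, in particular $w$, lies in a $3$-cut and is critical). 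For $k(e)=4$ the paper's collapse argument is genuinely different: if $C_u(e)\ne\{u\}$, non-criticality of $w$ forces two vertices $t_1,t_2\in S(e)\setminus\{w\}$ adjacent to $C_u(e)\setminus\{u\}$; each $t_i$ must also meet $D(e)$ (else $S(e)\setminus\{t_i\}$ is a $3$-cut through $w$), so by claw-freeness neither meets $C_v(e)$, leaving $N_G\big(C_v(e)\big)\subseteq\{u,w,t_3\}$ a $3$-cut through $w$ --- contradiction. This claw-free ``starvation of $C_v(e)$'' is also what actually closes (6): the contradiction there is a cut of size at most $3$ containing the non-critical vertex, not toughness or minimality as you suggest; and (5) hinges on applying (4) to the edge $vw$ with non-critical $u\in S(vw)$, a pivot your sketch leaves implicit. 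Your scaffolding and parts (1) and (3) are sound, but without the global count in (2) and the claw/criticality mechanism in (4)--(6) the proof does not go through.
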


\begin{proof}
	(1). By Lemma 3.3, $ w\big(G-S(e)\big)=2 $ and  $ w\big(\left(G-e\right)-S(e)\big)=3 $.  
	Note that $ C_{u}(e)=\left\{u\right\} $, otherwise $ \dfrac{\big|S(e)\cup \left\{u\right\}\big|}{w\big(G-S(e)\cup \left\{u\right\}\big)}\leq\dfrac{4}{3}<\dfrac{3}{2} $, a contradiction. Similarly, $ C_{v}(e)=\left\{v\right\} $. Since $ \delta(G)\geq 4 $, then $ N_{G}(u)=\left\{v\right\}\cup S(e)$ and $ N_{G}(v)=\left\{u\right\}\cup S(e) $.
	
	 (2). Let $ w\big(G-S(e)\big)=l $ and $ C_{1}=C(e), C_{2}, \cdots, C_{l} $ be the components of $ G-S(e) $. By Lemma 3.3, $ \dfrac{3}{2}l\leq k(e)< \dfrac{3}{2}l+\dfrac{3}{2} $.
	 
	 Since $ G $ is 3-connected, we have $ \big|N_{S(e)}(C_{i})\big|\geq 3 $ for each $ i\in\left\{1, 2, \cdots, l \right\} $. Thus there are at least 3$ l $ edges coming from the components of $ G-S(e) $ to $ S(e) $ counting at most one from any component $ C_{i} $ to a particular vertex of $ S(e) $. Since $ G $ is claw-free, every vertex of $ S(e) $ have neighbors in at most two components of $ G-S(e) $. Then there are at most 2$ k(e) $ edges coming from $ S(e) $ to the components of $ G-S(e) $ counting at most one edge from every component to a particular vertex of $ S(e) $. Therefore we conclude $ 3l\leq 2k(e)<2(\dfrac{3}{2}l+\dfrac{3}{2}) $ and then $ 3\leq\Big|N_{S(e)}\big(C(e)\big)\Big|\leq 5 $. 
	 Suppose $3\leq \Big|N_{S(e)}\big(C(e)\big)\Big|\leq 4$. Obviously, $ N_{S(e)}\big(C(e)\big) $ is a vertex set guaranteed by Lemma 3.3, which contradicts the minimality of $ S(e) $. So $ \Big|N_{S(e)}\big(C(e)\big)\Big|=5 $ and then $ 5+3(l-1)\leq 2k(e)<2(\dfrac{3}{2}l+\dfrac{3}{2}) $. Hence every vertex of $ S(e) $ has neighbors in exactly two components of $ G-S(e) $. Then the statement follows.
	
	(3). Combining $ \kappa(G)=3 $ with Lemma 3.4 (2), we have $ k(e)=4 $ and $ N_{S(e)}\big(D(e)\big)=S(e)\backslash\left\{w\right\} $.  
	
    (4). By Lemma 3.4 (1) and (2), $k(e)\in\left\{2, 4\right\} $.
	If $ k(e)=2 $, we have $ w\big(G-S(e)\big)=1 $ by Lemma 3.2.  Suppose either $ \big|C_{u}(e)\big|\geq2 $ or $ \big|C_{v}(e)\big|\geq2 $, then either $ S(e)\cup \left\{u\right\} $ or $ S(e)\cup \left\{v\right\} $ is a 3-vertex-cut of $ G $, a contradiction. Suppose $ \big|C_{u}(e)\big| =\big| C_{v}(e)\big|=1 $, then $ G=K_{4} $, which contradicts the toughness of $ G $. Therefore $ k(e)=4 $ and then $ w\big(G-S(e)\big)=2 $ by Lemma 3.2.  
	\par
Suppose $ C_{u}(e)\neq\left\{u\right\} $. Since $ \left\{u, w\right\}\cup N_{S(e)-\left\{w\right\}}\big(C_{u}(e)\backslash\left\{u\right\}\big)$ is a cut set of $ G $ and $ w $ is not critical, we have $ \Big|N_{S(e)-\left\{w\right\}}\big(C_{u}(e)\backslash\left\{u\right\}\big)\Big|\geq 2 $. Let $ S(e)=\left\{w, t_{1}, t_{2}, t_{3}\right\} $. Assume $ t_{1} $ and $ t_{2} $ are adjacent to $ C_{u}(e)\backslash\left\{u\right\} $. Since $ w $ is not contained in any 3-vertex-cut of $ G $, we see that $ N_{D(e)}(t_{i})\neq \emptyset $ for each $ i\in \left\{1, 2, 3\right\} $. Since $ G $ is claw-free, then $ t_{1} $ and $ t_{2} $ are not adjacent to $ C_{v}(e) $. Obviously, $ N_{G}\big(C_{v}(e)\big)\subseteq\left\{u, w, t_{3}\right\}$ is a cut set of $ G $ (see Figure 1), a contradiction. So $ C_{u}(e)=\left\{u\right\} $. Similarly, we can obtain $ C_{v}(e)=\left\{v\right\} $.

(5). Let $ e_{1}=vw $. Clearly, $ u\in S(e_{1}) $. Since $ u $ is not critical, we can obtain $ k(e_{1})=4 $ and $ C(e_{1})=\left\{v, w\right\}$ by Lemma 3.4 (\ref{result6}). Let $ t\in N_{D(e)}(w) $. Clearly, $ t\in S(e_{1}) $. As $ vt\notin E(G) $ and $ \delta(G)\geq 4 $, then $ d_{G}(v)=4 $, $ S(e_{1})=\left\{t\right\}\cup \big(N_{G}(v)\backslash\left\{w\right\}\big) $ and $ 2\leq \big|N_{G}(v, w)\big|\leq 3 $. It follows that $ N_{G}(w)\subseteq \left\{v, t\right\}\cup N_{G}(v)$. 

(6). Suppose $ C_{u}(e)\neq\left\{u\right\} $. Since $ u $ is noncritical and $ \left\{u\right\}\cup N_{S(e)}\big(C_{u}(e)\backslash\left\{u\right\}\big) $ is a cut set of $ G $, then  $\Big|N_{S(e)}\big(C_{u}(e)\backslash\left\{u\right\}\big)\Big|\geq 3 $ . If $ k(e)=3 $, we can obtain $ C_{u}(e)=\left\{u\right\} $ by Lemma 3.4 (1). If $ k(e)=4 $, since $ \kappa(G)=3 $ and $ G $ is claw-free, then there are at least two vertices of $ N_{S(e)}\big(C_{u}(e)\backslash\left\{u\right\}\big) $ adjacent to $ D(e) $ and not adjacent to $ C_{v}(e) $ (see Figure 2). Thus $ \Big| N_{S(e)}\big(C_{v}(e)\big)\Big|\leq 2 $, while $ N_{G}\big(C_{v}(e)\big)=\left\{u\right\}\cup N_{S(e)}\big(C_{v}(e)\big) $ is a cut set of $ G $, a contradiction. So $ C_{u}(e)=\left\{u\right\} $. If $ k(e)\geq 5 $, we know every vertex of $ N_{S(e)}\big(C_{u}(e)\backslash\left\{u\right\}\big) $ is adjacent to $ D(e) $ by Lemma 3.4 (2), and not adjacent to $ C_{v}(e) $ because $ G $ is claw-free. In the same conclusion as above, we can obtain $ C_{u}(e)=\left\{u\right\} $.
\end{proof}
	\begin{figure}[htbp]
	\centering
	\begin{minipage}[t]{0.48\textwidth}
		\centering
		\includegraphics[scale=0.4]{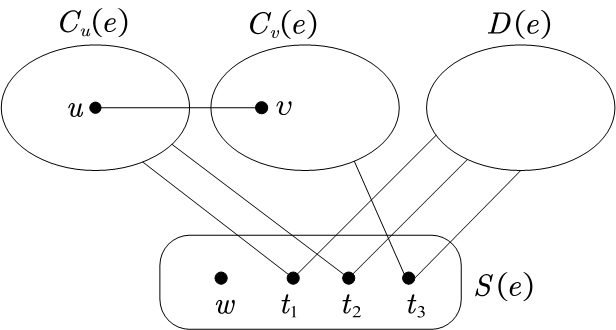}
		\caption{$ C_{u}\neq \left\{u\right\} $}
	\end{minipage}
	\begin{minipage}[t]{0.48\textwidth}
	\centering
	\includegraphics[scale=0.4]{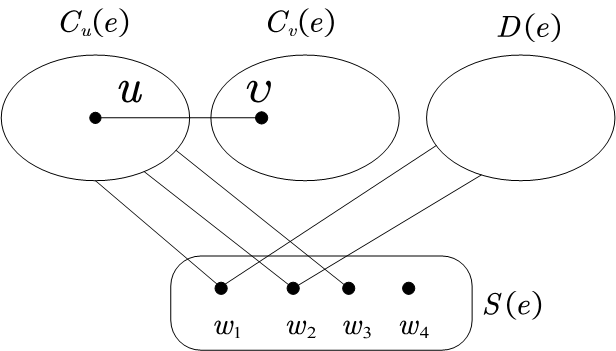}
	\caption{$ u $ is noncritical and $ C_{u}\neq\left\{u\right\} $}
\end{minipage}
\end{figure}

\begin{lem}
	Let $ G $ be a minimally $ \frac{3}{2} $-tough claw-free graph with $ \delta(G)\geq 4 $. For an arbitrary edge $ e=uv\in E(G) $, if $ u $ is not critical and $ k(e)\geq 4 $, then $ C(e)=\left\{u, v\right\} $ and $ \big|N_{S(e)}(u, v)\big|= 2 $.
\end{lem}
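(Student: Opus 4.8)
The plan is to reduce everything to the single claim $C_v(e)=\{v\}$ and then to count common neighbours. Because $u$ is not critical, Lemma 3.4 (6) gives $C_u(e)=\{u\}$, so $N_{S(e)}(u)=N_G(u)\setminus\{v\}$ and $\delta(G)\ge 4$ forces $\big|N_{S(e)}(u)\big|\ge 3$. Lemma 3.3 says that deleting $e$ raises the number of components by exactly one; since $e$ lies inside $C(e)$ it must be a bridge of $C(e)$, whence $C(e)=C_u(e)\cup C_v(e)=\{u\}\cup C_v(e)$, and the first assertion becomes equivalent to $C_v(e)=\{v\}$. I also record that $k(e)\ge 4$ forces $w\big(G-S(e)\big)\ge 2$ via Lemma 3.3, so $D(e)\neq\emptyset$ throughout.

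To prove $C_v(e)=\{v\}$ I would argue by contradiction, assuming $C_v(e)\neq\{v\}$. The first ingredient is that $G-u$ is $3$-connected (as $u$ is not critical): since $N_{G-u}\big(C_v(e)\big)=N_{S(e)}\big(C_v(e)\big)$ separates $C_v(e)$ from $D(e)\neq\emptyset$ inside $G-u$, this yields $\big|N_{S(e)}(C_v(e))\big|\ge 3$. The second ingredient is the claw-free mechanism already exploited in Lemma 3.4: any $s\in N_{S(e)}(u)$ adjacent both to some $c\in C_v(e)\setminus\{v\}$ and to some $d\in D(e)$ produces a claw $\{u,c,d\}$ centred at $s$, because $u$ and $c$ both lie in $C(e)$ yet are non-adjacent (the only neighbour of $u$ in $C(e)$ is $v$), while $d$ lies in a different component. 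For $k(e)\ge 5$, Lemma 3.4 (2) tells us that every vertex of $S(e)$ meets $D(e)$ and that $\big|N_{S(e)}(C(e))\big|=5$, which fixes the overlap of $N_{S(e)}(u)$ and $N_{S(e)}(C_v(e))$ and exhibits such an $s$. For $k(e)=4$ the minimality of $S(e)$ supplies the counting: $N_{S(e)}(C(e))$ separates $C(e)$ and still satisfies Lemma 3.3 (the bridge property is inherited and every vertex of $S(e)$ has a neighbour in $D(e)$ by $\delta(G)\ge 4$), so $\big|N_{S(e)}(C(e))\big|<k(e)$ would contradict minimality; hence $N_{S(e)}(C(e))=S(e)$, and then $\big|N_{S(e)}(u)\big|+\big|N_{S(e)}(C_v(e))\big|\ge 6>4$ again forces a common vertex realising the forbidden claw.

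Once $C(e)=\{u,v\}$ we have $N_{S(e)}(C(e))=N_{S(e)}(u)\cup N_{S(e)}(v)$ and $N_{S(e)}(u,v)=N_{S(e)}(u)\cap N_{S(e)}(v)$, with $\big|N_{S(e)}(u)\big|,\big|N_{S(e)}(v)\big|\ge 3$. Lemma 3.2 shows $e$ lies in a triangle (since $u$ is neither critical nor of degree $3$), so a common neighbour exists. For the lower bound, when $k(e)=4$ the identity $N_{S(e)}(u)\cup N_{S(e)}(v)=S(e)$ from the previous paragraph gives $\big|N_{S(e)}(u)\cap N_{S(e)}(v)\big|\ge 3+3-4=2$ immediately, while for $k(e)\ge 5$ the value $\big|N_{S(e)}(C(e))\big|=5$ together with Lemma 3.4 (5) (every common neighbour meets $D(e)$, forcing $d_G(v)=4$ and hence $\big|N_{S(e)}(v)\big|=3$) pins the count from below. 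The delicate point, which I expect to be the main obstacle, is the upper bound $\big|N_{S(e)}(u,v)\big|\le 2$: assuming three common neighbours $s_1,s_2,s_3$, one of them meets $D(e)$ (by Lemma 3.4 (2) when $k(e)\ge 5$, and by counting against $\big|N_{S(e)}(D(e))\big|\ge 3$ when $k(e)=4$), so Lemma 3.4 (5) forces $d_G(v)=4$ and $N_G(s_i)\subseteq\{v,t_i\}\cup N_G(v)$; feeding this back through claw-freeness at $u$ forces $\{s_1,s_2,s_3\}$ to be a clique, after which the separation of $D(e)$ from $\{u,v\}$ must be realised by a genuine $3$-cut (because $\kappa(G)=3$), and reconciling that $3$-cut with the non-criticality of $u$ gives the contradiction. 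I would carry this final step out separately for $k(e)=4$ and $k(e)\ge 5$, since the global counting that produces the $D(e)$-neighbour is governed by minimality of $S(e)$ in the former case and by Lemma 3.4 (2) in the latter.
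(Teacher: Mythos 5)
Your setup is sound and matches the paper's: $C_u(e)=\{u\}$ via Lemma 3.4\,(6), $D(e)\neq\emptyset$ from Lemma 3.3 when $k(e)\geq 4$, and the claw $\{s;u,c,d\}$ with $c\in C_v(e)\setminus\{v\}$, $d\in D(e)$ is exactly the mechanism the paper exploits. But the central step of your first half --- that counting \emph{forces} such an $s$ --- fails in both branches. A common neighbour of $u$ and $v$ produces no claw (since $uv\in E(G)$), so claw-freeness only gives disjointness of $N_{S(e)}(u)$ and $N_{S(e)}\big(C_v(e)\setminus\{v\}\big)$, with $\big|N_{S(e)}(u)\big|\geq 3$ and $\big|N_{S(e)}\big(C_v(e)\setminus\{v\}\big)\big|\geq 2$ (that is all $\kappa(G)=3$ gives; $v$ need not be a cut vertex toward $C_v(e)\setminus\{v\}$, but even granting $\geq 2$). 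For $k(e)\geq 5$ these fit disjointly inside the $5$-element set $N_{S(e)}\big(C(e)\big)$, so no overlap is forced: this non-overlapping configuration is precisely Case 1 of the paper's proof, which consumes the bulk of the argument (analysis of $N_G(v,w_1)$ through Lemma 3.4\,(5), the auxiliary edge $e_1=w_1w_4$ with its own Claim and cut-set contradictions). For $k(e)=4$ an overlap vertex $s$ \emph{is} forced ($3+2>4$, using $N_{S(e)}(C(e))=S(e)$, which you correctly get from minimality), but then claw-freeness tells you $s$ is \emph{not} adjacent to $D(e)$ --- so again no claw; Lemma 3.4\,(3) merely yields $N_{S(e)}\big(D(e)\big)=S(e)\setminus\{s\}$, a perfectly consistent configuration, and this is the paper's Case 2 with its further subcases 2.1 and 2.2. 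Your parenthetical claim that ``every vertex of $S(e)$ has a neighbour in $D(e)$ by $\delta(G)\geq 4$'' is false for $k(e)=4$; at most one vertex can fail to, and that possibility is exactly what keeps Case 2 alive.

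The second half has the same problem in the $k(e)\geq 5$ case: inclusion--exclusion with $\big|N_{S(e)}(v)\big|=3$, $\big|N_{S(e)}(u)\big|\geq 3$ inside the $5$-set gives only $\big|N_{S(e)}(u,v)\big|\geq 3+3-5=1$, not $\geq 2$; excluding the value $1$ is another hard step in the paper (one shows $w_1w_4\in E(G)$ or $w_1w_5\in E(G)$ is forced by $d_G(w_1)\geq 4$ and Lemma 3.4\,(5), then feeds the edge $w_1w_4$ back into the Case 1 machinery). Your $k(e)=4$ lower bound $\geq 3+3-4=2$ is fine and matches the paper. For the upper bound, your final sentence (``reconciling that $3$-cut with the non-criticality of $u$'') is not an argument; the paper instead passes to the auxiliary edge $e_3=uw_1$, shows $v$ has no neighbour in $D(e_3)$, applies Lemma 3.4\,(3) and (6) to pin $k(e_3)=4$ and $S(e_3)=N_G(u)\setminus\{w_1\}$, and concludes that $N_G\big(C_{w_1}(e_3)\setminus\{w_1\}\big)=\{w_1\}$ is a cut set of size one --- a concrete contradiction. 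In short, your proposal correctly identifies the easy scaffolding but asserts exactly those contradictions that fail on the critical configurations; the paper's proof is long precisely because those configurations (disjoint attachment for $k(e)\geq 5$, the single non-$D(e)$ vertex for $k(e)=4$, and the single-common-neighbour case) must each be dismantled by hand via auxiliary edges.
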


\begin{proof}
 By Lemma 3.2, $ uv $ must be contained in a triangle of $ G $. Assume $ w_{1}\in N_{G}(u, v) $ and $ S(e)=\left\{w_{i} | i=1, 2, \cdots, k(e)\right\}$. 	

 First, we will prove $ C(e)=\left\{u, v\right\} $. By Lemma 3.4 (6), $ C_{u}(e)=\left\{u\right\} $. Suppose to the contrary that $ C_{v}(e)\neq \left\{v\right\} $ and assume $ z\in N_{C_{v}(e)}(v) $. As $ \delta(G)\geq 4 $, then assume $ \left\{w_{2}, w_{3}\right\}\subseteq N_{G}(u) $. Consider the following two cases according to the common neighbors of $ u $ and $ C_{v}(e)\backslash\left\{v\right\} $.  
 
	{\bf Case 1.} 
		$ N_{S(e)}\big(u, C_{v}(e)\backslash\left\{v\right\}\big)= \emptyset $.
		
		As $ \kappa(G)=3 $, then $ \Big|N_{S(e)}\big(C_{v}(e)\backslash\left\{v\right\}\big)\Big|\geq 2 $. And as $ \big|N_{S(e)}(u)\big|\geq 3 $, then $ k(e)\geq 5 $. By Lemma 3.4 (2), we can assume $ N_{S(e)}\big(C(e)\big)=\left\{w_{1}, w_{2}, w_{3}, w_{4}, w_{5}\right\} $. Then $N_{S(e)}\big(C_{v}\left(e\right)\backslash\left\{v\right\}\big)=\left\{w_{4}, w_{5}\right\}$ and $ N_{G}(u)=\left\{v, w_{1}, w_{2}, w_{3}\right\} $.
		
		By Lemma 3.4 (2), $ w_{1}\in N_{S(e)}\big(D(e)\big) $. Since $ w_{1}\in N_{S(e)}(u, v, D(e)\big) $, then $\big|N_{G}(v, w_{1})\big|\geq 2$ by Lemma 3.4 (5). Obviously, $ N_{G}(v, w_{1})\subseteq \left\{u\right\}\cup C_{v}(e)\cup N_{S(e)}(C(e))$. Since $ w_{1}\in N_{S(e)}\big(u, D(e)\big) $ and $ G $ is claw-free, then $ N_{_{C_{v}(e)\backslash\left\{v\right\}}}(w_{1})=\emptyset$. Hence $ N_{G}(v, w_{1})\subseteq \left\{u, w_{2}, w_{3}, w_{4}, w_{5}\right\} $. 
		
		Suppose $ w_{2}\in N_{G}(v, w_{1}) $. By Lemma 3.4 (2), $\left\{w_{1}, w_{2}\right\}\subseteq N_{S(e)}\big(D(e)\big) $. As $ \left\{w_{1}, w_{2}\right\}\subseteq N_{S(e)}\big(u, v, D(e)\big) $, then $ N_{G}(v)=\left\{u, z, w_{1}, w_{2}\right\} $ and $ w_{3}\notin N_{G}(w_{1})\cup N_{G}(w_{2}) $ by  Lemma 3.4 (5). It follows that $ \left\{v, w_{1}, w_{2}\right\}\cap N_{G}(w_{3})=\emptyset $ and then $ uw_{3} $ is not contained in any triangle, which contradicts Lemma 3.2. Suppose $ w_{3}\in N_{G}(v, w_{1}) $. Similarly, we can obtain $ uw_{2} $ is not contained in any triangle, a contradiction.
		
		 Suppose $ w_{4}\in  N_{G}(v, w_{1}) $. By Lemma 3.4 (2), assume $ t_{1}\in N_{D(e)}(w_{1}) $. By Lemma 3.4 (5), then $ N_{G}(v)=\left\{u, z, w_{1}, w_{4}\right\} $ and $ N_{G}(w_{1})\subseteq\left\{v, t_{1}\right\}\cup N_{G}(v) $. As $ zw_{1}\notin E(G) $ and $ \delta(G)\geq 4 $, then $ N_{G}(w_{1})=\left\{u, v, w_{4}, t_{1}\right\}$. Assume $ t_{2}\in N_{_{C_{v}(e)\backslash\left\{v\right\}}}(w_{4}) $ and $ t_{3}\in N_{D(e)}(w_{4}) $ by Lemma 3.4 (2). We have $ t_{2}=z $ and $ t_{3}=t_{1} $, otherwise either $ G\big[\left\{w_{4}, t_{2}, v, t_{3}\right\}\big]=K_{1,3} $ or $ G\big[\left\{w_{4}, t_{2}, w_{1}, t_{3}\right\}\big]=K_{1,3} $, a contradiction. So $ \left\{t_{1}, z\right\}\subseteq N_{G}(w_{4}) $. Let $ e_{1}=w_{1}w_{4} $ (see Figure 3). Clearly, $ \left\{v, t_{1}\right\}\subseteq S(e_{1}) $.
		 
		{\bf Claim.}
	$k(e_{1})=4, N_{C(e_{1})}(t_{1})=\left\{w_{1}, w_{4}\right\} $, $ N_{C_{w_{1}}(e_{1})}(w_{1})=\left\{u\right\} $ and 	$ N_{C_{w_{1}(e_{1})}}(v)=\left\{u, w_{1}\right\}$.	 
		 
		  Since $ N_{G}(v)\subseteq N_{G}(w_{1})\cup N_{G}(w_{4}) $, then $ v\notin N_{S(e_{1})}\big(D(e_{1})\big) $. By Lemma 3.4 (3), $ k(e_{1})=4 $ and $ t_{1}\in N_{S(e_{1})}\big(D(e_{1})\big)$. Since $ t_{1}\in N_{S(e_{1})}\big(w_{1}, w_{4}, D(e_{1})\big) $ and $ G $ is claw-free, then $ N_{C(e_{1})}(t_{1})=\left\{w_{1}, w_{4}\right\} $. In addition, $ u\notin S(e_{1}) $. Otherwise, by Lemma 3.4 (3), $ S(e_{1})\backslash\left\{v\right\}=N_{S(e_{1})}\big(D(e_{1})\big) $ is a 3-vertex-cut  $ G $, which contains $ u $, a contradiction. So $ u\in C_{w_{1}}(e_{1}) $ and $ N_{C_{w_{1}}(e_{1})}(w_{1})=\left\{u\right\} $. Since $ zw_{4}\in E(G) $, then $ z\notin C_{w_{1}}(e_{1}) $ and $ N_{C_{w_{1}}(e_{1})}(v) =\left\{u, w_{1}\right\} $. \qed  
		  
		   If $ C_{w_{1}}(e_{1})\neq \left\{u, w_{1}\right\} $, then $N_{G}\big(C_{w_{1}}(e_{1})\backslash\left\{u, w_{1}\right\}\big) \subseteq\left\{u\right\}\cup \big(S(e_{1})\backslash\left\{v, t_{1}\right\}\big) $ is a cut set of $ G $ by Claim, a contradiction. If $ C_{w_{1}}(e_{1})=\left\{u, w_{1}\right\} $, then $ S(e_{1})= \left\{v, t_{1}, w_{2}, w_{3}\right\} $. As $ zw_{4}\in E(G) $, then $ z\in C_{w_{4}}(e_{1}) $ and $ C_{w_{4}}(e_{1})\neq \left\{w_{4}\right\} $. Moreover, since $ S(e_{1})\backslash\left\{v\right\}=N_{S(e)}\big(C_{w_{1}}(e_{1}), D(e_{1})\big) $ by Lemma 3.4 (3) and $ G $ is claw-free, then the vertices of $ S(e_{1})\backslash\left\{v\right\} $ are not adjacent to $ C_{w_{4}}(e_{1})\backslash\left\{w_{4}\right\} $. It follows that $N_{G}\big(C_{w_{4}}(e_{1})\backslash\left\{w_{4}\right\}\big)=\left\{v, w_{4}\right\} $ is a 2-vertex-cut of $ G $, a contradiction.  
		   
		   In the same conclusion as above, we can obtain $ w_{5}\notin  N_{G}(v, w_{1}) $. Therefore we conclude $ N_{G}(v, w_{1})=\left\{u\right\} $, a contradiction with Lemma 3.4 (5).
			\begin{figure}[htbp]
			\centering
			\begin{minipage}[t]{0.48\textwidth}
				\centering
				\includegraphics[scale=0.4]{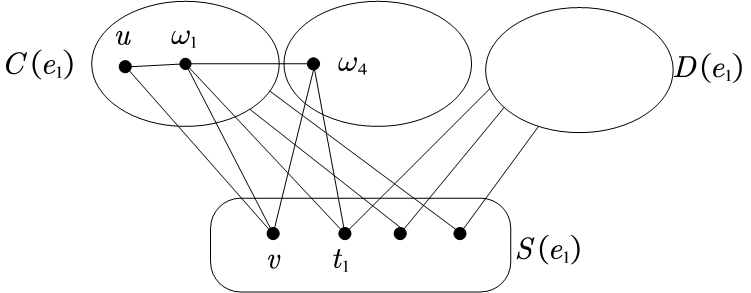}
				\caption{$ e_{1}=w_{1}w_{4} $}
			\end{minipage}
		\end{figure}
	
	{\bf Case 2.}
			$ \left\{w_{i}\right\}\subseteq N_{S(e)}\big(u, C_{v}(e)\backslash\left\{v\right\}\big)$ for some $ i\in \left\{1, 2, \cdots, k(e)\right\} $.
			
			Since $ G $ is claw-free, then $ w_{i}\notin N_{S(e)}\big(D(e)\big)$. By Lemma 3.4 (3), $ k(e)=4 $ and $ N_{S(e)}\big(D(e)\big)=S(e)\backslash\left\{w_{i}\right\} $. Thus $ N_{S(e)}\big(u, C_{v}(e)\backslash\left\{v\right\}\big)=\left\{w_{i}\right\} $. And as $ \kappa(G)=3 $, then $ \Big|N_{S(e)}\big(C_{v}(e)\backslash\left\{v\right\}\big)\Big|\geq 2 $. So there exists a vertex $ w_{j}\in N_{S(e)}\big(D(e), C_{v}(e)\backslash\left\{v\right\}\big) $ for some $ j\neq i $. Since $ G $ is claw-free, then $ uw_{j}\notin E(G) $. Thus $ w_{j}=w_{4} $ and $ N_{G}(u)=\left\{v, w_{1}, w_{2}, w_{3}\right\} $.
	
				{\bf Case 2.1.}
				$ N_{S(e)}\big(u, C_{v}(e)\backslash\left\{v\right\}\big)= \left\{w_{i}\right\}$ for some $ i\in \left\{2, 3\right\} $.
				
				Without loss of generality, we assume $ N_{S(e)}\big(u, C_{v}(e)\backslash\left\{v\right\}\big)=\left\{w_{2}\right\} $. Then $ N_{S(e)}\big(D(e)\big)=\left\{w_{1}, w_{3}, w_{4}\right\} $. By a similar argument as Case 1, we can obtain $\big|N_{G}(v, w_{1})\big|\geq 2$ and $ N_{G}(v, w_{1})\subseteq \left\{u, w_{2}, w_{3}, w_{4}\right\} $.
				
				 Consider that $ w_{2}\in N_{G}(v, w_{1}) $. As $ w_{1}\in N_{S(e)}\big(u, v, D(e)\big)$, then $ N_{G}(v)=\left\{u, z, w_{1}, w_{2}\right\} $ and $ w_{3}\notin N_{G}(w_{1}) $ by Lemma 3.4 (5). If $ w_{2}w_{3}\notin E(G) $, then $ uw_{3} $ is not contained in any triangle, which contradicts Lemma 3.2. If $ w_{2}w_{3}\in E(G) $, as $ G\big[\left\{v, w_{2}, w_{3}, w\right\}\big]=K_{1,3} $ for each $ w\in N_{_{C_{v}(e)\backslash\left\{v\right\}}}(w_{2})$ and $ w\neq z $, we see that $ N_{_{C_{v}(e)\backslash\left\{v\right\}}}(w_{2})=\left\{z\right\} $. Suppose $ C_{v}(e)\neq \left\{v, z\right\}$, then $N_{G}\big(C_{v}(e)\backslash\left\{v, z\right\}\big)\subseteq\left\{z, w_{4}\right\}$ is a cut set of $ G $ by our assumption, a contradiction. Suppose $ C_{v}(e)=\left\{v, z\right\} $, then $ N_{G}(z)\subseteq\left\{v, w_{2}, w_{4}\right\} $, a contradiction. Consider that $ w_{3}\in N_{G}(v, w_{1}) $. As $ \left\{w_{1}, w_{3}\right\}\subseteq N_{S(e)}\big(u, v, D(e)\big) $, then $ N_{G}(v)=\left\{u, z, w_{1}, w_{3}\right\} $ and $ w_{2}\notin N_{G}(w_{1})\cup N_{G}(w_{3}) $ by Lemma 3.4 (5). Therefore $ \left\{v, w_{1}, w_{3}\right\}\cap N_{G}(w_{2})=\emptyset $ and then $ uw_{2} $ is not contained in any triangle, which contradicts Lemma 3.2. By a similar argument as that in Case 1, we can obtain a contradiction for $ w_{4}\in N_{G}(v, w_{1}) $. Therefore we conclude $ N_{G}(v, w_{1})=\left\{u\right\} $, a contradiction.
				
				{\bf Case 2.2.}
					$ N_{S(e)}\big(u, C_{v}(e)\backslash\left\{v\right\}\big)=\left\{w_{1}\right\} $.
					
					 We claim that either $ vw_{2}\in E(G) $ or $ vw_{3}\in E(G) $. Otherwise, since $ \left\{w_{2}, w_{3}\right\}\cap N_{G}\big(C_{v}(e)\big)=\emptyset$, then $ N_{G}\big(C_{v}(e)\big)=\left\{u, w_{1}, w_{4}\right\} $ is a 3-vertex-cut of $ G $, a contradiction. Without loss of generality, assume $ vw_{2}\in E(G) $. As $ w_{2}\in N_{S(e)}\big(u, v, D(e)\big) $, then $ N_{G}(v)=\left\{u, z, w_{1}, w_{2}\right\} $ and $  N_{G}(w_{2})\subseteq\left\{v, y_{1}\right\}\cup N_{G}(v) $ by Lemma 3.4 (5), where $ y_{1}\in N_{D(e)}(w_{2}) $. Moreover, since $ zw_{2}\notin E(G) $ and $ d_{G}(w_{2})\geq 4 $, then $ w_{1}w_{2}\in E(G) $. Let $ e_{2}=w_{1}w_{2} $. Clearly, $ u\in S(e_{2}) $. By Lemma 3.4 (4), $ C(e_{2})=\left\{w_{1}, w_{2}\right\} $ and $ k(e_{2})=4 $. This means $ S(e_{2})=\left\{u, v, y_{1}, y_{2}\right\} $ and $ N_{G}(w_{1})\subseteq  S(e_{2})\cup \left\{w_{2}\right\} $, where $ y_{2}\in N_{_{C_{v}(e)\backslash\left\{v\right\}}}(w_{1}) $. Therefore $ \left\{v, w_{1}, w_{2}\right\}\cap N_{G}(w_{3})=\emptyset $ and then $ uw_{3} $ is not contained in any triangle, which contradicts Lemma 3.2. 
	
	Therefore $ C(e)=\left\{u, v\right\} $. Next, we will prove that $ \big|N_{S(e)}(u, v)\big|=2 $.
	 
	 Suppose $ \big|N_{S(e)}(u, v)\big| \geq 3 $. Assume $\left\{w_{1}, w_{2}, w_{3}\right\}\subseteq N_{S(e)}(u, v) $. Combing $ \kappa(G)=3 $ and Lemma 3.4 (2), we can assume $ w_{1}\in N_{S(e)}\big(D(e)\big) $. By Lemma 3.4 (5), $ N_{G}(v)=\left\{u, w_{1}, w_{2}, w_{3} \right\} $. Hence $ N_{G}(u)=\left\{v\right\}\cup N_{S(e)}\big(C(e)\big) $. If $ k(e)=4 $, then $ N_{S(e)}\big(C(e)\big)=S(e) $ by the minimality  of $ S(e) $. If $ k(e)\geq 5 $, then $ \Big| N_{S(e)}\big(C(e)\big)\Big|=5 $ by Lemma 3.4 (2). So $ d_{G}(u)\geq 5 $. Let $ e_{3}=uw_{1}$. Clearly, $ v\in S(e_{3}) $. As $ N_{G}(v)\subseteq\left\{u\right\}\cup N_{G}(u) $, then $ N_{D(e_{3})}(v)= \emptyset $. By Lemma 3.4 (\ref{result5}), $ k(e_{3})=4 $ and $ N_{S(e_{3})}\big(D(e_{3})\big)=S(e_{3})\backslash\left\{v\right\} $. Moreover, since $ C_{u}(e_{3})=\left\{u\right\} $ by Lemma 3.4 (\ref{result7}) and $ d_{G}(u)\geq 5 $, then $ S(e_{3})= N_{G}(u)\backslash\left\{w_{1}\right\} $. It follows that $ N_{D(e)}(w_{1})\subseteq C_{w_{1}}(e_{3}) $ and  $ S(e_{3})\backslash\left\{v\right\}\subseteq N_{S(e_{3})}\big(u, D(e_{3})\big) $. Since $ G $ is claw-free, then the vertices in $ S(e_{3})\backslash\left\{v\right\} $ are not adjacent to $ C_{w_{1}}(e_{3})\backslash\left\{w_{1}\right\}$. And as $ N_{G}(v)\subseteq\left\{u, w_{1}\right\}\cup S(e_{3}) $, then $ N_{G}\big(C_{w_{1}}(e_{3})\backslash\left\{w_{1}\right\}\big)=\left\{w_{1}\right\} $ is a cut set of $ G $, a contradiction. Thus $ \big|N_{S(e)}(u, v)\big|\leq 2 $. 
	
	If $ k(e)=4 $, as $ \delta(G)\geq 4 $, then $ \big|N_{S(e)}(u, v)\big|=2 $. If $ k(e)\geq 5 $, then assume $ N_{S(e)}\big(C(e)\big)=\left\{w_{1}, w_{2}, w_{3}, w_{4}, w_{5}\right\} $ by Lemma 3.4 (2). As $ \delta (G)\geq 4 $, then $ \big|N_{S(e)}(u, v)\big|\geq1 $. Suppose $ \big|N_{S(e)}(u, v)\big|\\=1$. As $ w_{1}\in N_{S(e)}(u, v) $, then $N_{S(e)}(u, v)=\left\{w_{1}\right\} $. Assume $ N_{G}(u)=\left\{v, w_{1}, w_{2}, w_{3}\right\} $ and $ N_{G}(v)=\left\{u, w_{1}, w_{4}, w_{5}\right\} $. By Lemma 3.4 (2), assume $ z_{1}\in N_{D(e)}(w_{1}) $. By Lemma 3.4 (5), $ N_{G}(w_{1})\subseteq\left\{v, z_{1}\right\}\cup N_{G}(v) $. Since $ d_{G}(w_{1})\geq 4 $, then either $ w_{1}w_{4}\in E(G) $ or $ w_{1}w_{5}\in E(G) $. Consider that $ w_{1}w_{4}\in E(G) $. Note that $ w_{4}w_{5}\in E(G) $, otherwise  $ G\big[\left\{u, v, w_{4}, w_{5}\right\}\big]=K_{1,3} $, a contradiction. Thus $ N_{G}(v)\subseteq N_{G}(w_{1})\cup N_{G}(w_{4}) $. And we have $ z_{1}\in N_{G}(w_{4}) $, otherwise $ G\big[\left\{u, w_{1}, w_{4}, z_{1}\right\}\big]=K_{1,3} $, a contradiction. Similarly as in Case 1, one can derive a contradiction for considering the edge $ w_{1}w_{4} $ as well. Similarly, we can also obtain a contradiction for  $ w_{1}w_{5}\notin E(G) $. Thus we can obtain $ \big|N_{S(e)}\left(u, v\right)\big|=2 $.
\end{proof}

\begin{thm}[\cite{Entringer}]
	In any critically 3-connected graph there are at least two vertices of degree 3.
\end{thm}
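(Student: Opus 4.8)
The plan is to run the standard fragment/atom machinery for critically connected graphs. First I would record the two structural facts that drive everything. Since $G$ is noncomplete and $3$-connected we have $\delta(G)\ge 3$, so it suffices to exhibit two vertices of degree \emph{exactly} $3$. And since every vertex is critical, deleting any $v$ leaves a $3$-connected-deficient graph; because $G$ is $3$-connected, $G-v$ is connected and has $\kappa(G-v)=2$, so there is a $2$-cut $\{x,y\}$ of $G-v$, whence $T=\{v,x,y\}$ is a $3$-cut of $G$ through $v$. Thus every vertex lies in a $3$-cut. For any minimum separator $S$ (that is, $|S|=3$) I would observe that each component $C$ of $G-S$ satisfies $N(C)=S$: otherwise $N(C)\subsetneq S$ is a separator of size at most $2$, contradicting $\kappa(G)=3$. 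I call such a component a \emph{fragment} and a fragment of least cardinality an \emph{atom}.

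The heart of the argument is to prove that every atom is a single vertex, which then automatically has degree $3$. Let $A$ be an atom with $N(A)=S$ and suppose for contradiction that $|A|\ge 2$. Pick $v\in A$ and take a $3$-cut $T\ni v$ supplied by criticality; since $G$ is $3$-connected, every component $F$ of $G-T$ has $N(F)=T$, so $|N(F)|=3$. Choosing $F$ so that $A\cap F\neq\emptyset$, I would apply submodularity of the neighborhood function, $|N(A\cap F)|+|N(A\cup F)|\le |N(A)|+|N(F)|=6$, and note that when $A\cap F$ and $A\cup F$ are both genuinely separating each of the two terms is at least $\kappa(G)=3$, forcing $|N(A\cap F)|=3$. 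Then $A\cap F$ is itself a fragment, and since $v\in T$ gives $v\notin A\cap F$ we get $|A\cap F|\le |A|-1$, contradicting the minimality of the atom. Hence $|A|=1$, and the single vertex of $A$ has neighborhood exactly $S$, i.e.\ degree $3$.

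To produce a second such vertex I would fix a minimum separator $S$ whose smaller side is the atom $A=\{v\}$ and set $\bar A=V(G)\setminus(A\cup S)$. This opposite side is nonempty and is a union of fragments each with neighborhood $S$. Running exactly the same minimality argument \emph{restricted to the fragments contained in $\bar A$} shows that a fragment of least cardinality inside $\bar A$ is again a single vertex of degree $3$; because it lies in $\bar A$ it is distinct from $v$. (The crossing step goes through verbatim, since intersecting a fragment inside $\bar A$ with a component of a $3$-cut keeps the result inside $\bar A$.) This yields the required two vertices of degree $3$.

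The main obstacle is the crossing step itself: to invoke the submodular inequality legitimately I must pin down the precise neighborhood function and then verify that the relevant ``corners'' $A\cap F$ and $\bar A\cap\bar F$ are nonempty and proper, so that both $A\cap F$ and $A\cup F$ are honest separating sets (rather than empty or all of $V(G)$) and the resulting set is a genuine fragment. This amounts to a careful case analysis of how the $3$-cut $T$ meets $A$, $S$, and $\bar A$, and it is exactly where one must choose $F$ judiciously, possibly replacing $A\cap F$ by the symmetric corner $\bar A\cap\bar F$. Once this crossing lemma is established, both the singleton conclusion for atoms and the existence of the second degree-$3$ vertex follow immediately.
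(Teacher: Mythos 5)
The paper offers no proof of this theorem at all---it is imported verbatim from Entringer--Slater \cite{Entringer}---so your sketch must be judged on its own merits, and on those merits it has a genuine gap: the crossing lemma is the actual content of the argument, and you have deferred exactly that. Two concrete failure points. First, in the singleton-atom step you ``choose $F$ so that $A\cap F\neq\emptyset$,'' but no such component need exist: the $3$-cut $T\ni v$ may contain the whole atom, $A\subseteq T$, which for $|A|\le 3$ is perfectly possible; indeed Mader's atom lemma says an atom meeting a minimum separator is \emph{contained} in it, so this is the expected configuration, not a degenerate one, and your crossing setup cannot even start there. Second, submodularity $|N(A\cap F)|+|N(A\cup F)|\le |N(A)|+|N(F)|=6$ only forces $|N(A\cap F)|=3$ when $|N(A\cup F)|\ge 3$, which in turn requires $V\setminus\big(A\cup F\cup N(A\cup F)\big)\neq\emptyset$. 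You name this nonemptiness issue yourself and then wave it off as ``a careful case analysis''; but that analysis, together with the $A\subseteq T$ case, \emph{is} the proof of the singleton-atom claim, so the first half of the theorem is asserted rather than proven.

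The second vertex is where the theorem actually lives, and ``the crossing step goes through verbatim'' is not correct. A minimum-cardinality fragment $B\subseteq \bar A$ is not an atom of $G$, so even a completed crossing lemma for atoms does not transfer: the cut $T$ through $u\in B$ may again satisfy $B\subseteq T$, and where submodularity does apply, the corner that turns out to be a small fragment may be the symmetric one $\bar B\cap\bar F$ (as you yourself anticipate), which need not lie inside $\bar A$---at which point the contradiction with minimality \emph{within} $\bar A$ evaporates. Worse, your plan quietly proves a statement stronger than the theorem, namely that $\bar A=V\setminus\big(\{v\}\cup N(v)\big)$ always contains a degree-$3$ vertex, i.e., that some second degree-$3$ vertex is non-adjacent to $v$; nothing in your sketch rules out that every other degree-$3$ vertex sits inside the separator $S=N(v)$, and this strengthening would need its own proof. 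In short, the proposal is a correct map of where the difficulty lies, but both halves---singleton atoms and the existence of a second degree-$3$ vertex---rest on the unproven crossing/case analysis, so the argument is incomplete at its core.
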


\noindent{\bf Proof of Theorem 1.7.}
If $ G $ is critically 3-connected, then $ \delta(G)=3 $ by Theorem 3.6. In the following assume that $ G $ is not critically 3-connected and $ u $ is a noncritical vertex of $ G $.

	Suppose $ \delta(G)\geq 4 $. Let $ e=uv $ and $ S(e)=\left\{w_{i}|i=1, 2, \cdots, k(e)\right\} $. By Lemma 3.4 (6), $ C_{u}(e)=\left\{u\right\} $. Since $ d_{G}(u)\geq 4 $, then $ k(e)\geq3 $.

{\bf Case 1.} $ k(e)=3 $. 
\par
As Lemma 3.4 (1) and $ \kappa(G)=k(e)=3 $, then $ N_{G}(v)=\left\{u\right\}\cup S(e)$ and $ S(e)\subseteq N_{S(e)}\big(u, v, D(e)\big) $. By Lemma 3.4 (5), $ N_{G}(w_{1})\subseteq \left\{v, t_{1}\right\}\cup N_{G}(v) $, where $ t_{1}\in N_{D(e)}(w_{1}) $. Since $ d_{G}(w_{1})\geq 4 $, then either $ w_{1}w_{2}\in E(G) $ or $ w_{1}w_{3}\in E(G) $. Without loss of generality, assume $ e_{1}=w_{1}w_{2}\in E(G) $. Clearly, $ \left\{u, v\right\}\subseteq S(e_{1})$. By Lemma 3.4 (4), $ N_{ D(e_{1})}(v)\neq \emptyset$ and then $ w_{3}\in D(e_{1})$. Hence $ \left\{w_{1}, w_{2}\right\}\cap N_{G}(w_{3})=\emptyset $. By Lemma 3.4 (5), $ N_{G}(w_{3})\subseteq \left\{v, t_{2}\right\}\cup N_{G}(v) $, where $ t_{2}\in N_{D(e)}(w_{3}) $. Since  $ \left\{w_{1}, w_{2}\right\}\cap N_{G}(w_{3})=\emptyset$, then $ d_{G}(w_{3})\leq 3 $, a contradiction.  
	
	{\bf Case 2.} $ k(e)\geq 4 $.
	
	 By Lemma 3.5, $ C(e)=\left\{u, v\right\} $ and assume $ N_{S(e)}(u, v)=\left\{w_{1}, w_{2}\right\} $. 
	
	{\bf Case 2.1.} $ k(e)=4 $.
	
	As $ \kappa(G)=3 $, assume $ w_{1}\in N_{S(e)}\big(D(e)\big) $. By Lemma 3.4 (5), $ d_{G}(v)=4 $ and $ N_{G}(w_{1})\subseteq \left\{v, t_{3}\right\}\cup N_{G}(v) $, where $ t_{3}\in N_{D(e)}(w_{1}) $. Since $ N_{S(e)}(u, v)=\left\{w_{1}, w_{2}\right\} $ and $ \delta(G)\geq 4 $, we can assume $ N_{G}(u)=\left\{v, w_{1}, w_{2}, w_{3}\right\} $ and $ N_{G}(v)=\left\{u, w_{1}, w_{2}, w_{4}\right\} $. As $ d_{G}(w_{1})\geq 4 $, then either $ w_{1}w_{2}\in E(G) $ or $ w_{1}w_{4}\in E(G) $.
	\par
   Suppose $ w_{1}w_{2}\in E(G) $. If $ w_{2}w_{3}\notin E(G) $, then $ \left\{v, w_{1}, w_{2}\right\}\cap N_{G}(w_{3})=\emptyset $. This means $ uw_{3} $ is not contained in any triangle, which contradicts Lemma 3.2. If $ w_{2}w_{3}\in E(G) $, assume $ e_{2}=w_{2}w_{3} $. Clearly, $ u\in S(e_{2}) $. By Lemma 3.4 (\ref{result6}), $ C(e_{2})=\left\{w_{2}, w_{3}\right\} $ and $ k(e_{2})=4 $. Then $ \left\{v, u, w_{1}\right\}\subseteq S(e_{2}) $ and $ N_{G}(w_{3})\subseteq \left\{w_{2}\right\}\cup S(e_{2}) $. Since $ v, w_{1}\notin N_{G}(w_{3}) $, then $ d_{G}(w_{3})\leq 3 $, a contradiction. 
  
   \par
  Suppose $ w_{1}w_{4}\in E(G) $.  Let $ e_{3}=uw_{1} $. Clearly, $ v\in S(e_{3}) $. Since $ N_{G}(v)\subseteq N_{G}(u)\cup N_{G}(w_{1}) $, then $ v $ is not adjacent to $ D(e_{3}) $. By Lemma 3.4 (\ref{result5}), $ N_{S(e_{3})}\big(D(e_{3})\big)=S(e_{3})\backslash\left\{v\right\} $. Moreover, since $ C_{u}(e_{3})=\left\{u\right\} $ by Lemma 3.4 (\ref{result7}), then $ \left\{w_{2}, w_{3}\right\}\subseteq S(e_{3}) $. Thus $ \left\{w_{2}, w_{3}\right\}\subseteq N_{S(e_{3})}\big(u, D(e_{3})\big) $. Since $ G $ is claw-free, then $\left\{w_{2}, w_{3}\right\}\cap N_{S(e_{3})}\big(C_{w_{1}}(e_{3})\backslash\left\{w_{1}\right\}\big)=\emptyset$. And as $ \left\{w_{2}, w_{3}\right\}\cap N_{G}(w_{1})=\emptyset $, then $\left\{w_{2}, w_{3}\right\}\cap N_{S(e_{3})}\big(C_{w_{1}}(e_{3})\big)=\emptyset$. Hence $ N_{G}\big(C_{w_{1}}(e_{3})\big)\subseteq\left\{u\right\}\cup \big(S(e_{3})\backslash\left\{w_{2}, w_{3}\right\}\big) $ is a cut set of $ G $. However, since $ k(e_{3})=4 $ by Lemma 3.4 (3), then $ \Big|N_{G}\big(C_{w_{1}}(e_{3})\big)\Big|\leq 3 $, a contradiction.
   
   Thus we conclude $  w_{1}w_{2}\notin E(G) $ and $ w_{1}w_{4}\notin E(G) $, a contradiction.
	\par 	
	{\bf Case 2.2.} $ k(e)\geq 5 $. 
	\par
	By Lemma 3.4 (2), we can assume $ N_{S(e)}\big(C(e)\big)=\left\{w_{1}, w_{2}, w_{3}, w_{4}, w_{5}\right\} $ and $ w_{1}\in N_{S(e)}\big(D(e)\big) $.
	As $ w_{1}\in N_{S(e)}\big(u, v, D(e)\big) $, then $ d_{G}(v)=4 $ by Lemma 3.4 (5). Assume $ N_{G}(v)=\left\{u, w_{1}, w_{2}, w_{5}\right\} $. Since $ N_{S(e)}(u, v)=\left\{w_{1}, w_{2}\right\} $, then $N_{G}(u)=\left\{v, w_{1}, w_{2}, w_{3}, w_{4}\right\} $. 
Let $ e_{4}=uw_{3} $. By Lemma 3.4 (6), $ C_{u}(e_{4})=\left\{u\right\} $. Then $ N_{G}(u)\backslash\left\{w_{3}\right\}\subseteq S(e_{4}) $. Clearly, $ k(e_{4})\geq 4 $.

 If $ k(e_{4})=4 $, then $ S(e_{4})=N_{G}(u)\backslash\left\{w_{1}\right\}$. By Lemma 3.4 (2), $ \emptyset\neq N_{D(e)}(w_{3})\subseteq C_{w_{3}}(e_{4})$. As $ \kappa(G)=3 $, then $ \Big|N_{S(e_{4})}\big(C_{w_{3}}(e_{4})\backslash\left\{w_{3}\right\}\big)\Big|\geq 2 $. Since $ N_{S(e_{4})}\big(C_{w_{3}}(e_{4})\backslash\left\{w_{3}\right\}\big)\subseteq N_{S(e_{4})}(u) $ and $ G $ is claw-free, then $ N_{S(e_{4})}\big(C_{w_{3}}(e_{4})\backslash\left\{w_{3}\right\}, D(e_{4})\big)=\emptyset $. Hence $ \Big|N_{S(e_{4})}\big(D(e_{4})\big)\Big|\leq 2 $, a contradiction.	
 
 If $ k(e_{4})\geq 5 $, then $ \left\{v, w_{1}, w_{2}\right\}\subseteq N_{S(e_{4})}\big(u, D(e_{4})\big) $ by Lemma 3.4 (\ref{result4}). As $ G $ is claw-free, then $ \left\{v, w_{1}, w_{2}\right\}\cap N_{S(e_{4})}\big(C_{w_{3}}(e_{4})\backslash\left\{w_{3}\right\}\big)=\emptyset $. Moreover, by Lemma 3.4 (2), $ \left\{w_{1}, w_{2}\right\}\subseteq N_{S(e)}(u, v, D(e)) $. Then $ w_{3}\notin N_{G}(w_{1})\cup N_{G}(w_{2}) $ by Lemma 3.4 (5). So $\left\{v, w_{1}, w_{2}\right\}\cap N_{S(e_{4})}\big(C_{w_{3}}(e_{4})\big)\\=\emptyset $. Hence $ N_{G}\big(C_{w_{3}}(e_{4})\big)\subseteq\left\{u\right\} \cup N_{S(e_{4})}\big(C(e_{4})\big)\backslash\left\{v, w_{1}, w_{2}\right\}$ is a cut set of $ G $. However, as $ \Big|N_{S(e_{4})}\big(C(e_{4})\big)\Big|=5 $ by Lemma 3.4 (2), then $ \big|N_{G}\big(C_{w_{3}}(e_{4})\big)\big|\leq 3 $, a contradiction. 
			
Therefore we can obtain $ \delta(G)=3 $. \qed
\small

\end{document}